\DeclareSymbolFont{cyrletters}{OT2}{wncyr}{m}{n}
\DeclareMathSymbol{\Sha}{\mathalpha}{cyrletters}{"58}
\newcommand{\F}{\mathbb{F}}
\newcommand{\PP}{\mathbb{P}}
\newcommand{\Q}{\mathbb{Q}}
\newcommand{\Z}{\mathbb{Z}}
\newcommand{\Kbar}{{\overline{K}}}
\newcommand{\ksep}{{k^{\operatorname{sep}}}}
\newcommand{\Ksep}{{K^{\operatorname{sep}}}}
\DeclareMathOperator{\Aut}{Aut}
\DeclareMathOperator{\Char}{char}
\DeclareMathOperator{\disc}{disc}
\DeclareMathOperator{\Gal}{Gal}
\DeclareMathOperator{\sign}{sign}
\newcommand{\GL}{\operatorname{GL}}
\newcommand{\slantsf}[1]{\textsl{\textsf{#1}}}
\newtheorem{theorem}{Theorem}[section]
\newtheorem{lemma}[theorem]{Lemma}
\newtheorem{corollary}[theorem]{Corollary}
\newtheorem{proposition}[theorem]{Proposition}
\theoremstyle{definition}
\newtheorem{conjecture}[theorem]{Conjecture}
\newtheorem{example}[theorem]{Example}
\theoremstyle{remark}
\newtheorem{remark}[theorem]{Remark}
\begin{document}

\title[Large arboreal]{Large arboreal Galois representations}
\keywords{Arithmetic dynamics; arboreal representation; iterated monodromy group}
\author{Borys Kadets}

\address{Department of Mathematics, Massachusetts Institute of Technology, Cambridge, MA 02139-4307, USA}
\email{bkadets@mit.edu}
\urladdr{\url{http://math.mit.edu/~bkadets/}}

\begin{abstract}
Given a field $K$, a polynomial $f \in K[x]$ of degree $d$, and a suitable element $t \in K$, the set of preimages of $t$ under the iterates $f^{\circ n}$ carries a natural structure of a $d$-ary tree. We study conditions under which the absolute Galois group of $K$ acts on the tree by the full group of automorphisms. When $d \geq 20$ is even and $K=\mathbb{Q}$ we exhibit examples of polynomials with maximal Galois action on the preimage tree, partially affirming a conjecture of Odoni. We also study the case of $K=F(t)$ and $f \in F[x]$ in which the corresponding Galois groups are the monodromy groups of the ramified covers $f^{\circ n}: \PP^1_F \to \PP^1_F$.
\end{abstract}

\maketitle

\section{Introduction}

Let $K$ be a field, $f \in K[x]$ be a polynomial of degree $d$, and $t \in K$ be an arbitrary element. Write $f^{\circ n} = f \circ f \circ ... \circ f$ for the $n$th iterate of $f$ (with the convention $f^{\circ 0}(x)=x$). 
Assume that the polynomials $f^{\circ n}(x)-t$ are separable for all $n$. Fix a separable closure $\Ksep$ of $K$. Then the roots of $f^{\circ n}(x) - t$ in $\Ksep$ for varying $n$ have a natural tree structure.
 Namely, define a graph with the set of vertices equal to the union of roots of $f^{\circ n} (x) - t$ for all $n\geq 0$. Draw an edge between two roots $\alpha, \beta$ when $f(\alpha)=\beta$. Then when $t$ is not a periodic point of $f$, the resulting graph is a complete rooted $d$-ary tree $T_\infty$. Vertices at level $n$ correspond to the roots of $f^{\circ n}(x)-t$ (the root of the tree $t$ is at level zero). Let $T_n$ be the tree formed by the first $n$ levels of $T_\infty$. For example, the next figure is the tree $T_2$ when $f=x^2-2$ and $t=0$.

\centerline{
\xymatrix{
&  & &*+[F]{0} \ar@{-}[drr] \ar@{-}[dll] & && \\
  &*+[F]{ -\sqrt{2}} \ar@{-}[dr] \ar@{-}[dl] & & & &*+[F]{\sqrt{2}} \ar@{-}[dr] \ar@{-}[dl] & \\
 *+[F]{\sqrt{2-\sqrt{2}}} &  &*+[F]{-\sqrt{2-\sqrt{2}}} & &*+[F]{\sqrt{2+\sqrt{2}}} & &*+[F]{-\sqrt{2+\sqrt{2}}} 
}}

\vspace{1mm}
Let $G \colonequals \Gal(\Ksep/K)$ be the absolute Galois group of $K$. The Galois action on $\Ksep$ defines a homomorphism $\phi: G \to \Aut(T_\infty)$ known as the \slantsf{arboreal Galois representation} attached to $f$ and $t$. This construction is parallel to that of the Tate module of an abelian variety, where one replaces $f$ by the multiplication by $p$ endomorphism $[p]$. With this analogy in mind, it is natural to seek a counterpart of Serre's open image theorem in arithmetic dynamics. The crucial difference is that while the closed subgroups of $\GL_2(\Z_p)$ are easy to describe using Lie theory, the subgroup structure of the profinite group $\Aut(T_\infty)$ is complicated; this makes analyzing possibilities for the image of an arboreal representation a hard problem. Our goal is to study conditions for having the simplest kind of Galois image, namely the whole group $\Aut(T_\infty)$.  

 \begin{example}
 Let $K=\mathbb{Q}$, $t=0$ and $f=x^2+1$. Then the arboreal representation is surjective (see \cite{Stoll1992}).
 \end{example}
 
It is conjectured that when $K$ is a number field and $d=2$ the image of an arboreal representation has finite index in $\Aut(T_\infty)$ unless some degeneracy conditions are satisfied (see \cite{Jones2013}*{Conjecture~3.11}). However very little is known when the degree of $f$ is greater than $2$. We will show that when the degree of $f$ is even there is a criterion for the surjectivity of the arboreal representation. 
In Section \ref{large} we prove the following theorem.

\begin{theorem}\label{intro-surjective}
Assume that the degree of $f$ is even and $\Char K \neq 2$. Assume that for any $n$ and any $\alpha \in f^{-n}(t)$ the splitting field of $f(x)-\alpha$ over $K(\alpha)$ has Galois group $S_d$. If the discriminants $\disc(f^{\circ n}(x) - t)$ are linearly independent in the $\mathbb{F}_2$ vector space $K^\times/K^{\times 2}$, then the arboreal representation attached to $f$ and $t$ is surjective.
\end{theorem}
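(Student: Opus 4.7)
I would prove surjectivity by induction on $n$, showing the finite-level representations $\phi_n \colon G \to \Aut(T_n)$ are surjective for each $n \geq 1$; the theorem then follows by passing to the inverse limit. The base case $n = 1$ is the $n = 0$ instance of the splitting-field hypothesis applied to $\alpha = t$. For the inductive step, let $K_j$ denote the splitting field of $f^{\circ j}(x) - t$ and assume $\Gal(K_{n-1}/K) = \Aut(T_{n-1})$. Setting $m \colonequals d^{n-1}$, the group $N \colonequals \Gal(K_n / K_{n-1})$ embeds naturally into $\prod_{\alpha \in f^{-(n-1)}(t)} \Gal(L_\alpha/K(\alpha)) = S_d^m$, where $L_\alpha$ is the splitting field of $f(x) - \alpha$ over $K(\alpha)$, and is closed under conjugation by lifts of $\Aut(T_{n-1})$; the goal is to show $N = S_d^m$.

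Because $d$ is even, a routine wreath-product computation gives $\Aut(T_n)^{\ab} \isom (\Z/2)^n$, with the $i$-th sign character on $\Aut(T_n)$ corresponding under $\phi$ to the quadratic character cut out by $\sqrt{D_i}$, where $D_i \colonequals \disc(f^{\circ i}(x) - t)$. The discriminant hypothesis is thus equivalent to the surjectivity of $G \to \Aut(T_n)^{\ab}$ for every $n$; together with the inductive surjectivity onto $\Aut(T_{n-1})$, this forces the restriction of the top sign character to $N$, which equals the product-of-signs map $S_d^m \to \Z/2$, to be surjective. Now the projection of $N$ onto any $\alpha$-factor has image $\Gal(L_\alpha K_{n-1}/K_{n-1})$, a normal subgroup of $\Gal(L_\alpha/K(\alpha)) = S_d$. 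For even $d$, every proper normal subgroup of $S_d$ is contained in $A_d$, so a proper image in any factor would force $N \subset A_d^m$ and trivialize the product-of-signs map on $N$, a contradiction. Hence each factor-projection is $S_d$ and $N \subset S_d^m$ is a subdirect product.

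The main obstacle is then to upgrade ``subdirect'' to ``equal'': an $\Aut(T_{n-1})$-invariant subdirect subgroup $N \subset S_d^m$ whose product-of-signs map is surjective must equal $S_d^m$. For $d \geq 5$, I would first check that $N \cap A_d^m$ is itself subdirect in $A_d^m$ (otherwise $N$ would embed into the abelian quotient $(\Z/2)^m$, contradicting the surjectivity of each factor-projection onto the non-abelian $S_d$). Scott's lemma for direct powers of the simple non-abelian group $A_d$, together with $\Aut(T_{n-1})$-invariance, then describes $N \cap A_d^m$ as a diagonal subgroup relative to some $\Aut(T_{n-1})$-invariant partition of the $m$ level-$(n-1)$ leaves; such partitions correspond to the level structure of $T_{n-1}$. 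The key combinatorial point, and the most delicate step, is that any non-trivial block structure forces the image of $N$ in $(\Z/2)^m = S_d^m / A_d^m$ to land inside the parity hyperplane, contradicting surjectivity of the product-of-signs. This pins down $N \cap A_d^m = A_d^m$, and a parallel analysis in the abelian quotient $(\Z/2)^m$ upgrades this to $N = S_d^m$. The low-degree cases $d = 2$ and $d = 4$ are variants of the same argument adapted to the slightly different normal-subgroup lattices of those $S_d$.
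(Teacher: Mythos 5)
Your plan is correct in outline but takes a genuinely different route from the paper. Both arguments share the computation that $\Aut(T_n)^{\mathrm{ab}}\cong(\Z/2)^n$ with coordinates the level sign characters $\chi_1,\dots,\chi_n$ (the group-theoretic content of Proposition~\ref{tree_quadratics}), and both use the inductive surjectivity onto $\Aut(T_{n-1})$ plus the discriminant hypothesis to conclude that the product-of-signs is nontrivial on $N=\Gal(K_n/K_{n-1})$. The divergence is in how that nontriviality is upgraded to $N=S_d^{d^{n-1}}$. The paper never touches the structure theory of subdirect products: via Lemmas~\ref{disjoint} and~\ref{all_disjoint} it reduces linear disjointness of the $d^{n-1}$ splitting fields $L_{\alpha_i}K_{n-1}/K_{n-1}$ to linear disjointness of their quadratic subfields $K_{n-1}(\sqrt{\disc(f(x)-\alpha_i)})$, and then proves the needed linear independence of those $d^{n-1}$ discriminant classes in $K_{n-1}^\times/K_{n-1}^{\times 2}$ by a self-contained induction on tree levels using only the transitive (and doubly transitive on siblings) Galois action, Lemma~\ref{products}, and the evenness of $d$. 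You instead analyze $N$ directly as an $\Aut(T_{n-1})$-invariant subdirect subgroup of $S_d^{m}$ and invoke Scott's lemma/Goursat-type structure theory for $N\cap A_d^m$ together with a classification of invariant partitions. Each buys something: the paper's argument is more elementary and uniform in $d$ (no appeal to simplicity of $A_d$, hence no separate handling of $d=2,4,6$), and it yields the finer fact that all $d^{n-1}$ intermediate discriminants are independent mod squares; your version packages the inductive step as a clean group-theoretic statement about invariant subdirect products and makes the role of even block sizes conceptually transparent. If you carry it out, two lemmas still need proof: (i) the $\Aut(T_{n-1})$-invariant partitions of the $d^{n-1}$ leaves are exactly the level partitions, so nontrivial blocks have size a positive power of $d$ and hence even; and (ii) an $\Aut(T_{n-1})$-invariant $\F_2$-subspace of $(\Z/2)^m$ not contained in the parity hyperplane is everything, i.e.\ the socle of the permutation module is the trivial line. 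Both are true, but the second is essentially the same tree-induction as Proposition~\ref{tree_quadratics} in another guise, so the total workload is comparable.
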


This theorem allows us to prove surjectivity of some arboreal representations attached to even degree polynomials. The following conjecture was made by Odoni in \cite{Odoni1985a}*{Conjecture~7.5.}.

\begin{conjecture}
Let $F$ be a Hilbertian field. Then for any integer $d$ there exists a degree $d$ monic polynomial $f$ such that the associated arboreal representation (for $t=0$) is surjective.
\end{conjecture}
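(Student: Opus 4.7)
The plan is to establish the conjecture for $n$ even by producing a polynomial $f$ that satisfies the hypotheses of Theorem~\ref{intro-surjective}, and to flag the odd-degree case as the main obstruction to a complete proof. Concretely, the goal is to find $f \in F[x]$ of degree $n$ such that (a) for every $k$ and every $\alpha \in f^{-k}(0)$, the splitting field of $f(x)-\alpha$ over $F(\alpha)$ has Galois group $S_n$, and (b) the elements $\disc(f^{\circ k}(x))$ for $k \geq 1$ are linearly independent in $F^\times/F^{\times 2}$.

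The natural route is to work generically and then specialize using the hilbertian hypothesis. Introduce indeterminates $\underline{a}=(a_0,\dots,a_{n-1})$ over $F$ and set
\begin{equation*}
f_{\underline{a}}(x) \colonequals x^n + a_{n-1}x^{n-1}+\cdots+a_0 \in F(\underline{a})[x].
\end{equation*}
For (a), I would argue inductively on $k$: at each step $f_{\underline{a}}(x)-\alpha$ is a generic polynomial of degree $n$ over the field $F(\underline{a})(\alpha)$ obtained by adjoining the previous layer of roots, and a generic polynomial of degree $n$ has Galois group $S_n$. This yields the iterated wreath product $[S_n]^{\wr k}$ as the Galois group of $f_{\underline{a}}^{\circ k}(x)$ over $F(\underline{a})$. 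For (b), I would exploit a valuation-theoretic argument: show that each new discriminant $\disc(f_{\underline{a}}^{\circ k})$ introduces a prime of $F[\underline{a}]$ with odd valuation not appearing in any earlier discriminant. Since the discriminants of iterates factor through resultants involving the critical values of $f_{\underline{a}}^{\circ j}$, and these critical values lie on distinct irreducible subvarieties as $j$ grows, linear independence modulo squares in $F(\underline{a})^\times$ follows. Both properties are open Hilbertian conditions in $\underline{a}$, so Hilbert's irreducibility theorem, applicable since $F$ is hilbertian, yields a specialization $\underline{a}\mapsto\underline{a}_0 \in F^n$ under which both (a) and (b) persist; the resulting $f \colonequals f_{\underline{a}_0}$ then satisfies Theorem~\ref{intro-surjective}, and its arboreal representation is surjective.

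The main obstacle is twofold. On the technical side, verifying that the iterated discriminants are truly independent modulo squares over $F(\underline{a})$ is delicate: one needs either an explicit recursive identity expressing $\disc(f^{\circ k})$ as a product of $\disc(f^{\circ k-1})^n$ with a resultant-type factor and checking that the latter contributes a new square class, or a careful ramification analysis on $\Spec F[\underline{a}]$. On the conceptual side, Theorem~\ref{intro-surjective} is fundamentally limited to even degree, since in odd degree one loses control over the sign-of-discriminant / alternating-subgroup dichotomy that powers the criterion; consequently the conjecture for odd $n$ is not accessible by this route, and my proposal can only establish the even-degree case, yielding the partial affirmation of Odoni's conjecture advertised in the abstract.
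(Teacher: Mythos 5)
The statement you are asked about is Odoni's Conjecture, which the paper does not prove; it only proves the partial affirmation in Section~\ref{surj} (even $n \geq 20$ over $\Q$). Your proposal, a generic-polynomial-plus-Hilbert-irreducibility strategy, is not the route the paper takes, and it contains a gap that is exactly the crux of why the conjecture is hard.

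The gap is that Hilbert's irreducibility theorem controls only finitely many specialization conditions at a time, while conditions (a) and (b) in your plan are each \emph{infinitely} many conditions, one for each level $k$ of the tree. For each fixed $k$ the set of specializations $\underline{a}_0 \in F^n$ destroying (a) or (b) at level $k$ is thin, but the union over all $k$ of these thin sets could a priori exhaust $F^n$, and HIT says nothing about that. This is not a technicality: Odoni himself already proved that the generic polynomial $f_{\underline{a}}$ over $F(\underline{a})$ has surjective arboreal representation (this is essentially the main theorem of \cite{Odoni1985a}), so the ``generic'' half of your argument is known; the entire content of the conjecture is whether one can specialize while preserving surjectivity at \emph{all} levels simultaneously. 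Your proposal does not supply a mechanism for doing so.

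The paper's Section~\ref{surj} circumvents this by constructing explicit $f \in \Q[x]$ whose good behavior is self-propagating: an Eisenstein condition at a prime $q$ and a two-segment Newton-polygon condition at a prime $p$ (Lemmas~\ref{Eisenstein} and~\ref{wild}) persist automatically as one passes from $f(x)-\alpha_{n-1}$ to $f(x)-\alpha_n$ up the tree, guaranteeing that every ``tiny'' extension has Galois group $A_d$ or $S_d$ via Proposition~\ref{big_local}; and a fixed-point condition $f(D)=D$ combined with a large prime $\ell \equiv 3 \pmod 4$ and Lemma~\ref{rigid} forces each $f^{\circ n}(C)$ to carry a fresh odd-order prime divisor, so the discriminants stay independent modulo squares at every level. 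No application of HIT appears anywhere. If you want to salvage the HIT approach you would have to find a finite set of conditions on $\underline{a}_0$ (not one per level) that implies (a) and (b) for all $k$ --- which is in effect what the paper's local conditions do --- and that is where the real work lies.

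Your diagnosis that the method is restricted to even degree is correct and agrees with the paper, though the precise mechanism is the parity step in the proof of Theorem~\ref{localtoglobal}, where a product over a cluster of $d$ siblings must be a square, which requires $d$ even.
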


Until recently, the conjecture was open even for $F=\mathbb{Q}$ (see \cite{Jones2013}*{Conjecture~2.2}). Looper \cite{Looper2019} proved Odoni's conjecture for $F = \mathbb{Q}$ and $d$ prime, and showed that for many other values of $d$ Odoni's conjecture can be deduced from Vojta's conjecture. In Section \ref{surj} we prove the following theorem unconditionally.

\begin{theorem}
Let $d \geq 20$ be an even number. Then there exists infinitely many polynomials $f \in \mathbb{Q}[x]$ of degree $d$ such that the arboreal representation associated to $f$ and $t=0$ is surjective.
\end{theorem}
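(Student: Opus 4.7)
The plan is to apply Theorem~\ref{intro-surjective}: I must produce infinitely many degree-$n$ polynomials $f\in\Q[x]$ satisfying (a) for every $\alpha$ in the backward orbit of $0$ the Galois group of $f(x)-\alpha$ over $\Q(\alpha)$ is $S_n$, and (b) the discriminants $\disc(f^{\circ k})$, $k\ge 1$, are $\F_2$-linearly independent in $\Q^\times/\Q^{\times 2}$.

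First I would reduce condition~(b) to a critical-orbit statement. Taking $f$ monic to remove leading-coefficient noise, a direct expansion of $\Res(f^{\circ k},(f^{\circ k})')$ via the chain rule $(f^{\circ k})'(x)=\prod_{i=0}^{k-1}f'(f^{\circ i}(x))$ yields
\[
\disc(f^{\circ k}) \;\equiv\; \pm \prod_{c\in\mathrm{Crit}(f)} f^{\circ k}(c) \pmod{\Q^{\times 2}}.
\]
The cancellation that makes this work is that each factor coming from an index $i\ge 1$ is raised to the exponent $n^i$, which is even since $n$ is even, so only the $i=0$ term survives modulo squares. Thus~(b) is equivalent to $\F_2$-independence of the classes $[P_k]$ in $\Q^\times/\Q^{\times 2}$, where $P_k:=\prod_{c\in\mathrm{Crit}(f)}f^{\circ k}(c)$, together with $[-1]$.

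Second, I would look for $f$ inside a Morse family of degree $n$ parameterised by its coefficients. By an inductive argument of Odoni, the generic Galois group of $f^{\circ k}$ over $\Q(\vec a)$ is the iterated wreath product $S_n\wr\cdots\wr S_n$ of depth $k$, which equals the stabiliser of the root in $\Aut(T_k)$. Hilbert irreducibility produces a thin complement of specialisations hitting the generic group at any fixed level, and a diagonal intersection across all $k$ yields infinitely many $\vec a\in\Q^n$ for which (a) holds at every level simultaneously. The role of $n\ge 20$ at this step is merely to leave ample free parameters for the subsequent arithmetic constraints.

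Third, for (b) I would arrange the critical values so that $\{P_k\}$ is independent mod squares. For any finite range $k\le K$, a finite list of primes $p_1,\dots,p_K$ can be chosen and the critical values of $f$ prescribed modulo high powers of these primes (by the Chinese Remainder Theorem) so that $v_{p_k}(P_k)$ is odd while $v_{p_k}(P_j)=0$ for $j\ne k$; this is a congruence condition on $\vec a$ compatible with the Hilbert set above. For all $k$ beyond a finite cutoff, one invokes a dynamical primitive-prime-divisor theorem in the style of Ingram--Silverman: for $f$ with wandering critical orbits over $\Q$, the sequence $\{f^{\circ k}(c)\}_k$ contains a prime of first appearance for every sufficiently large $k$, so that $P_k$ introduces a new independent square class.

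The main obstacle is cross-cancellation between the orbits of different critical points: even if each individual $f^{\circ k}(c)$ admits a primitive prime divisor, these contributions could combine within $P_k$ to form a square. Ruling this out requires the orbits of the $n-1$ critical points to be jointly generic at every level—a strong thin-set genericity condition. This is precisely where the bound $n\ge 20$ enters: $n-1\ge 19$ critical points inside an $n$-dimensional coefficient space leave enough slack to enforce joint non-degeneracy of the $P_k$ at every level via thin-set avoidance, and combining this with the Hilbert set of step~2 yields the infinite family of polynomials demanded by the theorem.
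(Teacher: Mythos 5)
Your proposal correctly identifies Theorem~\ref{intro-surjective} as the engine and correctly splits the task into (a) producing $S_n$-extensions at each node and (b) establishing independence of the discriminant classes modulo squares. The route you sketch, however, differs substantially from the paper's and has two genuine gaps.

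First, the ``diagonal intersection across all $k$'' of Hilbert sets is precisely the step that does not follow from Hilbert irreducibility and is exactly why Odoni's conjecture remained open. At each fixed level $k$, Hilbert's theorem gives a thin exceptional set $T_k\subset\Q^n$; but $\bigcup_k T_k$ is a countable union of thin sets, and there is no general theorem guaranteeing that this union fails to exhaust $\Q^n$, nor that specializations outside it exist. The paper sidesteps this entirely: it does not use Hilbert irreducibility at all for (a). Instead, Proposition~\ref{big_local} shows that a purely local arithmetic condition on $f$ at two primes $p,q$ (Eisenstein at $q$, plus a specific two-segment Newton polygon at $p$) is preserved under taking $f(x)-\alpha$ for every $\alpha\in f^{-n}(0)$ and every $n$, forcing the Galois group of each such polynomial to be $S_d$ or $A_d$ uniformly across all levels. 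This is the key idea that replaces your proposed infinite intersection of Hilbert sets with a single local hypothesis.

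Second, the cross-cancellation problem you identify for $P_k=\prod_c f^{\circ k}(c)$ is real, and you do not resolve it --- you only assert that $n\ge 20$ ``leaves enough slack,'' which is not an argument. The paper avoids this entirely by \emph{not} taking $f$ Morse: it chooses $f'(x)=(2k+2)(x-C)\,g(x)^2$, so that by Lemma~\ref{disc} all critical contributions except the simple root $C$ are squares, and $\disc(f^{\circ n})\equiv f^{\circ n}(C)\pmod{\Q^{\times 2}}$. This reduces (b) to independence of a \emph{single} critical orbit modulo squares, which is then handled by a clever fixed-point trick (making $D=f(0)$ a fixed point, so Lemma~\ref{rigid} controls which primes can divide two different $f^{\circ k}(C)$) together with a non-square condition modulo a large auxiliary prime $\ell$. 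Relatedly, your reading of the hypothesis $n\ge 20$ is wrong: it is not about parameter-counting or thin-set genericity, but is needed so that a prime $p$ with $d-3\ge p\ge d/2+5$ exists (a Bertrand/Nagura-type interval), which is exactly what the Newton polygon argument in Proposition~\ref{big_local} requires. Your appeal to Ingram--Silverman primitive prime divisors is a plausible alternative direction for (b), but as stated it does not address the joint cancellation across several critical orbits, which is the actual obstruction; the paper's single-critical-orbit construction is designed precisely to make that obstruction disappear.
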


Recently, independently of our work, Joel Specter \cite{Specter2018-preprint} and Robert Benedetto and Jamie Juul \cite{Benedetto-Juul2019} proved related results on Odoni's conjecture for number fields. Specter shows  that Odoni's conjecture holds for any number field. Benedetto and Juul prove Odoni's conjecture for even degree polynomials over an arbitrary number field, and for odd degree polynomials over number fields $F$ that do not contain $\mathbb{Q}(\sqrt{d}, \sqrt{d+1})$.

Consider the case when $K=F(u)$ for some field $F$ and indeterminate $u$, $f$ is a polynomial defined over $F$, and $t=u$. Let $K_n$ be the splitting field of $f^{\circ n}(x)-t$ over $K$. Then the groups $\Gal(K_n/K)$ are the monodromy groups of the ramified coverings $\mathbb{P}^1_F \to \mathbb{P}^1_F$ induced by $f^{\circ n}$.   In this case the image of the resulting arboreal representation is known as the iterated monodromy group of $f$. These monodromy groups have been studied in the case when $f$ is post-critically finite  (see \cite{Nekrashevych 2011}), when $f$ is quadratic (see \cite{Pink2013}) and also in \cite{Juul2016-preprint}. Recall that a polynomial $f$ is called post-critically finite if for every root $\gamma$ of $f'$ the orbit of $\gamma$ under $f$ is finite. In Section \ref{iterated} we prove the following theorem.

\begin{theorem}\label{intro-monodromy}
Let $f \in F[x]$ be a degree $2m$ polynomial such that the Galois group of the splitting field of $f(x)-t$ over $K=\overline{F}(t)$ is the symmetric group $S_{2m}$. Assume that $f'(x)$ is irreducible over $F$ and $f$ is not post-critically finite. Then the arboreal representation attached to $f$ and $t$ over $K$ is surjective. 
\end{theorem}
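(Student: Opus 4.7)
The plan is to verify the three hypotheses of Theorem~\ref{intro-surjective} with base field $\Kbar(t)$ and starting element $t$. Even degree is immediate, and I tacitly assume $\Char K = 0$, or at least $\Char K \neq 2$ and $\Char K \nmid 2m$, so that $f'$ has degree $2m-1$. For the hypothesis on preimage Galois groups: if $\alpha \in f^{-n}(t)$, then $t = f^{\circ n}(\alpha)$ gives $\Kbar(t)(\alpha) = \Kbar(\alpha)$, a rational function field in the transcendental $\alpha$; so the Galois group of the splitting field of $f(x) - \alpha$ over $\Kbar(\alpha)$ matches that of $f(x) - s$ over $\Kbar(s)$ for any transcendental $s$, which is $S_{2m}$ by hypothesis.

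The substantive work is the $\F_2$-linear independence of the discriminants. Let $\gamma_1, \dots, \gamma_{2m-1}$ be the roots of $f'$ in $\Kbar$. A direct chain-rule computation, using that $(2m)^i$ is even for $i \geq 1$, reduces the discriminant modulo squares:
\[
\disc\bigl(f^{\circ n}(x) - t\bigr) \equiv P_n(t) := \prod_{j=1}^{2m-1}\bigl(t - f^{\circ n}(\gamma_j)\bigr) \pmod{\Kbar(t)^{\times 2}}.
\]
So I must show that for every nonempty finite $S \subseteq \Z_{\geq 1}$, the product $\prod_{n \in S} P_n$ is not a square in $\Kbar(t)^\times$, i.e., that some linear factor $(t - \alpha)$ appears an odd number of times.

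The key lemma --- and, I expect, the main obstacle --- is a disjointness statement: for all $a > b \geq 0$ and all $j, j' \in \{1, \dots, 2m-1\}$, one has $f^{\circ a}(\gamma_j) \neq f^{\circ b}(\gamma_{j'})$. To prove it, suppose the contrary. Since $f'$ is irreducible, $\Gal(\Kbar/K)$ acts transitively on $\{\gamma_1, \dots, \gamma_{2m-1}\}$; pick $\sigma$ sending $\gamma_j$ to $\gamma_{j'}$ and set $\gamma^{(i)} := \sigma^i(\gamma_j)$. Because $\sigma$ fixes $K$ and commutes with $f$, applying $\sigma^i$ to the assumed equality yields $f^{\circ a}(\gamma^{(i)}) = f^{\circ b}(\gamma^{(i+1)})$ for all $i$. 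As $\sigma$ is a permutation of the finite set $\{\gamma_1, \dots, \gamma_{2m-1}\}$, the sequence $\gamma^{(i)}$ is periodic in $i$, and setting $\delta_i := f^{\circ b}(\gamma^{(i)})$ yields $f^{\circ(a-b)}(\delta_i) = \delta_{i+1}$, so the $\delta_i$'s form a cycle under $f^{\circ(a-b)}$. Hence $f^{\circ b}(\gamma_j)$ is periodic under $f$, so $\gamma_j$ is pre-periodic, contradicting the non-PCF hypothesis.

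Given the lemma, set $N := \max S$ and $\alpha := f^{\circ N}(\gamma_1)$. The lemma forces $v_\alpha(P_n) = 0$ for every $n \in S$ with $n < N$. At the top level, $v_\alpha(P_N) = \#\{j : f^{\circ N}(\gamma_j) = f^{\circ N}(\gamma_1)\}$ is the cardinality of an equivalence class for the $\Gal(\Kbar/K)$-invariant relation $j \sim j' \iff f^{\circ N}(\gamma_j) = f^{\circ N}(\gamma_{j'})$; transitivity of the Galois action on the indices forces all classes to share the same size, which therefore divides $2m-1$ and is odd. Thus $v_\alpha\bigl(\prod_{n \in S} P_n\bigr) = v_\alpha(P_N)$ is odd, the product is not a square, and Theorem~\ref{intro-surjective} applies to deliver surjectivity.
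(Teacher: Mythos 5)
Your proof is correct and follows the same strategy as the paper: apply the surjectivity criterion, use the Odoni-type reduction $\disc(f^{\circ n}(x)-t)\equiv \prod_j\bigl(f^{\circ n}(\gamma_j)-t\bigr)\pmod{K(t)^{\times 2}}$ (the paper's Lemma~\ref{disc}), and then exploit Galois transitivity on the critical points, the oddness of $2m-1$, and the non-PCF hypothesis. The one structural difference is in packaging: the paper first proves a more general criterion (Theorem~\ref{monodromy}), stated in terms of the multisets $\Gamma_n'$ without assuming $f'$ irreducible, and then deduces the present statement in a remark, observing that irreducibility of $f'$ forces $\Gamma_n'\cap\Gamma_m'\neq\emptyset\Rightarrow\Gamma_n'=\Gamma_m'$, which via iteration yields a periodic critical orbit. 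You instead prove the stronger pointwise disjointness $f^{\circ a}(\gamma_j)\neq f^{\circ b}(\gamma_{j'})$ for $a>b$ directly via the $\sigma^i$-cycling device, and then count equivalence-class sizes at the top level. These are morally the same argument; yours is more concrete, while the paper's version is more modular and also covers reducible $f'$.

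One small omission: at the end of your disjointness lemma you conclude that $\gamma_j$ is pre-periodic and declare this to contradict the non-PCF hypothesis. But the definition of post-critically finite requires \emph{every} critical point to have finite forward orbit, so one pre-periodic critical point is not yet a contradiction. The fix is one sentence: since $f'$ is irreducible over $K$, every $\gamma_k$ equals $\tau(\gamma_j)$ for some $\tau\in\Gal(\Kbar/K)$, and because $f\in K[x]$ commutes with $\tau$, applying $\tau$ to the periodicity relation for $\gamma_j$ shows that each $\gamma_k$ is also pre-periodic; hence $f$ is PCF, the desired contradiction. You should make this explicit.
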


In fact we prove a stronger statement that provides a criterion for the surjectivity of the iterated monodromy group attached to an even degree polynomial $f$ in terms of the critical orbit of $f$. Jamie Juul \cite{Juul2016-preprint}*{Proposition~3.2.} proves a version of theorem \ref{intro-monodromy} for finite level arboreal representations and polynomials of arbitrary degree. However we do not know if the class of polynomials for which Juul's result implies surjectivity of the infinite level arboreal representation is large. Our result shows that when $F$ is a number field most polynomials have surjective infinite level iterated monodromy groups, see Remark \ref{most-polys}.    


\section{Large arboreal representations over an arbitrary field}\label{large}

We begin by introducing some arboreal notation. Fix a field $K$ of characteristic not $2$, a polynomial $f \in K[x]$ of degree $d$, and an element $t \in K$. Assume $t$ is not periodic under $f$, and assume that the polynomials $f^{\circ n}(x)-t$ are separable for all $n$. Write $T_\infty$ for the preimage tree of $t$ under $f$ as in the introduction. Let $K_n/K$ denote the splitting field of $f^{\circ n}(x)-t$ over $K$. Denote by $T_n$ the tree formed by the first $n$ levels of $T_\infty$.
The Galois group $\Gal(K_n/K)$ injects into the automorphism group of the tree $\Aut(T_n)$. We call the extension $K_n/K$ \slantsf{maximal} if $\Gal(K_n/K)=\Aut(T_n)$.
 
The aim of this section is to prove a criterion for surjectivity of arboreal representations $\phi: G \to \Aut(T_\infty)$. We establish surjectivity in three steps. Step one is to show that for all $n$ and $\alpha \in f^{-n}(t)$ the Galois group of the splitting field of $f(x)-\alpha$ over $K(\alpha)$ is $S_n$. Step two is to show that the splitting field of $f(x)-\alpha$ is disjoint from $K_n/K(\alpha)$. Step three is to show that the $d^n$ extensions of $K_n$ given by the splitting fields of $f(x)-\alpha_i$ for different $\alpha_i \in f^{-n}(t)$ are linearly disjoint. We show that steps two and three can be reduced to the arithmetic of forward orbits of $f$. 

The main idea is simple: any proper normal subgroup of $S_d$ is contained in $A_d$; therefore to show that some $S_d$ extensions are linearly disjoint it is enough to show that their (unique) quadratic subfields are linearly disjoint, as we will now prove.

\begin{lemma}\label{disjoint}
Let $L/K$ be a Galois extension of fields with Galois group $S_d$. Let $F$ denote the unique quadratic extension of $K$ contained in $L$. Let $M/K$ be an arbitrary Galois extension. Then the fields $L$ and $M$ are linearly disjoint over $K$ if and only if the fields $F$ and $M$ are linearly disjoint over $K$.
\end{lemma}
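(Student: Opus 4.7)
The plan is to reduce the statement to the purely group-theoretic fact that every proper normal subgroup of $S_d$ is contained in $A_d$, and then apply Galois theory.

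The ``only if'' direction is immediate: since $F \subseteq L$, if $L$ and $M$ are linearly disjoint over $K$, so are $F$ and $M$. So the content is the converse.

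For the ``if'' direction, I would assume $F \cap M = K$ and try to show $L \cap M = K$. Since $L/K$ and $M/K$ are both Galois, this will suffice for linear disjointness. Set $E \colonequals L \cap M$; then $E/K$ is Galois and corresponds to a normal subgroup $N = \Gal(L/E) \trianglelefteq S_d$. I would invoke the classification of normal subgroups of $S_d$: for $d \neq 4$ these are $\{1\}, A_d, S_d$, and for $d = 4$ one has the additional Klein four-group $V_4 \subseteq A_4$. In every case, every proper normal subgroup of $S_d$ is contained in $A_d$.

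If $N \subsetneq S_d$, then $N \subseteq A_d$, so $E = L^N \supseteq L^{A_d} = F$. But $E \subseteq M$, so $F \subseteq M$, forcing $F = F \cap M = K$, contradicting $[F:K] = 2$. Hence $N = S_d$, i.e., $E = K$, which gives linear disjointness of $L$ and $M$ over $K$ as desired. No step here is really the main obstacle; the only point requiring care is the $d = 4$ case, where one must remember that the extra normal subgroup $V_4$ still sits inside $A_4$ and therefore does not interfere with the argument.
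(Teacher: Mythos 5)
Your proof is correct and follows essentially the same route as the paper: reduce to the Galois correspondence and invoke the fact that every proper normal subgroup of $S_d$ lies inside $A_d$, so that a nontrivial intersection $L \cap M$ would have to contain $F$. The extra care you take with the $d=4$ case (the Klein four-group $V_4 \subseteq A_4$) is a nice touch but does not change the argument.
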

\begin{proof}
If $L$ and $M$ are disjoint, then so are $F$ and $M$. Assume that $F$ and $M$ are disjoint. The field extension $K \subset \left(L \cap M\right) \subset L$ corresponds to a normal subgroup of $S_d$. Any nontrivial normal subgroup of $S_d$ is contained in $A_d$ and therefore if  $L \cap M$ were not $K$ it would contain $F$. 
\end{proof}

\begin{lemma}\label{all_disjoint}
Let $L_i/K$ for $i=1,...,n$ be $S_d$-Galois extensions of $K$. Let $F_i \subset L_i$ denote the unique quadratic extension of $K$ inside $L_i$. Then the fields $L_i$ are linearly disjoint if and only if the fields $F_i$ are linearly disjoint.
\end{lemma}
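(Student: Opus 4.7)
The plan is to induct on $n$, with the base case $n=2$ handled directly by Lemma \ref{disjoint}. The forward direction will be immediate, since any collection of subfields of linearly disjoint fields is automatically linearly disjoint.

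For the converse, suppose $F_1, \ldots, F_n$ are linearly disjoint over $K$. First I would apply the induction hypothesis to the subfamily $F_1, \ldots, F_{n-1}$ to conclude that $L_1, \ldots, L_{n-1}$ are linearly disjoint, and set $M \colonequals L_1 \cdots L_{n-1}$. Since $M/K$ is Galois, Lemma \ref{disjoint} applied to the pair $(L_n, M)$ reduces the problem to showing $F_n$ is linearly disjoint from $M$; as $F_n/K$ is quadratic, this amounts to verifying $F_n \not\subset M$.

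The central computation is to enumerate the quadratic subfields of $M$. By induction, $\Gal(M/K) \isom S_d^{n-1}$; since $[S_d, S_d] = A_d$, its commutator subgroup is $A_d^{n-1}$ and its abelianization is $(\Z/2\Z)^{n-1}$. Via the Galois correspondence, the quadratic subfields of $M$ therefore correspond bijectively to nonzero characters of $(\Z/2\Z)^{n-1}$, and writing $F_i = K(\sqrt{d_i})$, a direct inspection identifies these subfields as $K\bigl(\sqrt{\prod_{i \in S} d_i}\bigr)$ for nonempty $S \subseteq \{1, \ldots, n-1\}$. Were $F_n$ contained in $M$, we would have $d_n \equiv \prod_{i \in S} d_i \pmod{K^{\times 2}}$ for some nonempty $S$, contradicting the linear disjointness of $F_1, \ldots, F_n$, which is equivalent to the $\F_2$-linear independence of $d_1, \ldots, d_n$ in $K^\times/K^{\times 2}$.

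The main obstacle will be the group-theoretic identification of the quadratic subfields of $M$, which hinges on the fact that $[S_d, S_d] = A_d$ together with the uniqueness of $A_d$ as an index-$2$ subgroup of $S_d$. Everything else is a clean induction built on top of Lemma \ref{disjoint}.
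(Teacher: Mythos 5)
Your proposal is correct and follows essentially the same route as the paper: induct on $n$, reduce via Lemma \ref{disjoint} to showing $F_n \not\subset M := L_1 \cdots L_{n-1}$, and then classify the quadratic subfields of $M$ using the fact that every quadratic character of $\Gal(M/K) \cong S_d^{n-1}$ is a product of sign or trivial characters on the factors. Your explicit invocation of $[S_d,S_d]=A_d$ and the abelianization $(\Z/2\Z)^{n-1}$ is a slightly more structured phrasing of what the paper does by writing the character $\chi$ directly as $\chi_1 \cdots \chi_{n-1}$, but the argument is the same.
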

\begin{proof}
If $L_i$ are linearly disjoint, then $F_i$ are also linearly disjoint. Assume that $F_i$ are linearly disjoint. 
We use induction on $n$. The case $n=2$ follows from Lemma \ref{disjoint}. Suppose that the result holds for $n-1$. Then the fields $L_1, ..., L_{n-1}$ are linearly disjoint. Let $M$ denote the compositum $L_1...L_{n-1}$. The Galois group $\Gal(M/K)$ is isomorphic to $\left(S_d\right)^{n-1}$. We need to show that $M$ and $L_n$ are disjoint. By Lemma \ref{disjoint} it is enough to show that $F_n$ and $M$ are disjoint. 

Let $F/K$ be a quadratic extension of $K$ contained in $M$. Then $F$ defines a character $\chi: \left(S_d\right)^{n-1}=\Gal(M/K) \to \left\{ \pm 1\right\}$. The character $\chi$ can be written as
 \[\chi(a_1, ..., a_{n-1})=\chi_1(a_1) \cdot ... \cdot  \chi_{n-1}(a_{n-1}), \: (a_1, ..., a_{n-1})\in S_d \times ... \times S_d,\] 
 where $\chi_i : S_d \to \left\{ \pm 1\right\}$ are quadratic characters. The group $S_d$ has only two quadratic characters: the trivial character and the sign character. Let $b_i \in K$ be elements such that $F_i=K(\sqrt{b_i})$. Let $I \subset \{1, ..., n-1\}$ be the set of indices $i$ for which $\chi_i$ is nontrivial. Then the extension $F$ is obtained by adjoining to $K$ a square root of $\prod_{i \in I} b_i$.  In particular, since $F_n$ is disjoint from the compositum $F_1...F_{n-1}$, it is not a subfield of $M$ and therefore $M$ and $L_n$ are disjoint.
\end{proof}

Lemma \ref{all_disjoint} allows us to reduce proving surjectivity of an arboreal representation to Kummer theory.  We also need a standard fact about discriminants.

\begin{lemma}\label{products}
Let $S$ be a commutative ring, let $P,Q \in S[x]$ be arbitrary polynomials. Then $\disc (P Q)=(-1)^{\deg P \deg Q}\disc(P)\disc(Q)R(P,Q)^2$ where $R(P,Q)$ is the resultant of $P$ and $Q$.
\end{lemma}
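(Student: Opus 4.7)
The plan is to reduce the identity to a universal polynomial identity in the coefficients of $P$ and $Q$, and then verify it in the generic factored case.

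First, I would observe that once the degrees $m \colonequals \deg P$ and $n \colonequals \deg Q$ are fixed, each of $\disc(P)$, $\disc(Q)$, $\disc(PQ)$, and $R(P,Q)$ is given by a fixed polynomial with integer coefficients in the coefficients of $P$ and $Q$ (for instance via the Sylvester matrix: $\disc(P) = (-1)^{m(m-1)/2} a^{-1} \Res(P, P')$ where $a$ is the leading coefficient of $P$, and likewise for $R$). Therefore the difference of the two sides of the claimed identity is a fixed element of $\mathbb{Z}[a_0,\dots,a_m,b_0,\dots,b_n]$, and it suffices to show this element is zero. Vanishing is preserved under the substitution homomorphism $\mathbb{Z}[a_0,\dots,a_m,b_0,\dots,b_n] \to S$ that sends the universal coefficients to those of $P$ and $Q$, so we are reduced to the universal case.

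Now I would work over $R \colonequals \mathbb{Z}[a,b,\alpha_1,\dots,\alpha_m,\beta_1,\dots,\beta_n]$ with $P(x) = a \prod_i (x-\alpha_i)$ and $Q(x) = b\prod_j(x-\beta_j)$; passing to the fraction field (of characteristic zero), the standard factored formulas give
\begin{align*}
\disc(P) &= a^{2m-2} \prod_{i<j}(\alpha_i-\alpha_j)^2, \qquad \disc(Q) = b^{2n-2}\prod_{i<j}(\beta_i-\beta_j)^2,\\
R(P,Q) &= a^n b^m \prod_{i,j}(\alpha_i-\beta_j).
\end{align*}
Since $PQ$ has degree $m+n$, leading coefficient $ab$, and root multiset $\{\alpha_i\}\cup\{\beta_j\}$,
\begin{equation*}
\disc(PQ) = (ab)^{2(m+n)-2}\prod_{i<j}(\alpha_i-\alpha_j)^2\prod_{i<j}(\beta_i-\beta_j)^2\prod_{i,j}(\alpha_i-\beta_j)^2.
\end{equation*}
Multiplying out the right-hand side of the claimed identity and matching exponents of $a$ and $b$ (note $(2m-2)+2n = 2(m+n)-2$ and $(2n-2)+2m = 2(m+n)-2$) gives exactly $\disc(PQ)$.

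There is no real obstacle here: the computation in the factored case is a one-line bookkeeping exercise, and the only subtle point is the universal-identity reduction, which is needed because an arbitrary commutative ring $S$ need not be a domain and $P$, $Q$ need not factor over it. This subtlety is standard and costs nothing once formulated correctly.
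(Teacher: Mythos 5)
Your proof is correct and takes essentially the same route as the paper: write out the discriminants and resultant in terms of leading coefficients and roots, then match exponents. The one difference is that you explicitly justify the reduction to the universal polynomial ring $\mathbb{Z}[a_0,\dots,a_m,b_0,\dots,b_n]$ before passing to the factored case, a step the paper leaves implicit by simply citing Lang's normalization and writing down the factored formulas; your version is therefore slightly more complete since $S$ is an arbitrary commutative ring over which $P$ and $Q$ need not split.
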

\begin{proof}
We recall the normalization of discriminant and resultant for non-monic polynomials from \cite{LangAlgebra}*{Chapter~IV~\S8}.
Assume $P,Q$ have degrees $n,m$, leading coefficients $a,b$, and roots $\alpha_1, ..., \alpha_n$ and $\beta_1, ..., \beta_m$. Then the resultant is given by $R(P,Q)=a^mb^n \prod_{i,j}(\alpha_i-\beta_j)$, and the relevant discriminants are given by the formulas $\disc(P)=(-1)^{n(n-1)/2}a^{2n-2}\prod_{i \neq j}(\alpha_i-\alpha_j)$, $\disc(Q)=(-1)^{m(m-1)/2}b^{2n-2}\prod_{i \neq j}(\beta_i-\beta_j)$, and \[\disc(PQ)=(-1)^{(n+m)(n+m-1)/2}(ab)^{2(n+m)-2} \prod_{i\neq j}(\alpha_i-\alpha_j) \prod_{i \neq j}(\beta_i-\beta_j)\prod_{i,j}(\alpha_i-\beta_j)^2.\]
\end{proof}

Fix a field $K$ of characteristic not $2$ and fix  an element  $t \in K$. Let $f \in K[x]$ be a degree $d$ polynomial where $d$ is even. Consider the corresponding tree $T_\infty$, arboreal representation $\phi : G \to \Aut(T_\infty)$, and the extensions $K_n/K$.
\begin{theorem}\label{localtoglobal}
 Assume that for some $n \in \mathbb{Z}_{>0}$ the field extension $K_{n-1}/K$ is maximal.  Assume also that for every $\alpha \in f^{-(n-1)}(t)$ the polynomial $f(x)-\alpha$ is irreducible over $K(\alpha)$ and has Galois group $S_d$ or $A_d$. Then $K_n/K$ is maximal if and only if $\disc(f^{\circ n}(x) - t) \not\in K_{n-1}^{\times 2}$.
\end{theorem}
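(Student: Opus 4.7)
The plan is to reduce the theorem to a Kummer-theoretic question about $\mathbb{F}_2$-linear independence, and then to a group-theoretic structural claim about permutation modules for iterated wreath products. For each $\alpha \in f^{-(n-1)}(t)$, let $L_\alpha$ denote the splitting field of $f(x)-\alpha$ over $K(\alpha)$, and set $v_\alpha \colonequals [\disc(f(x)-\alpha)] \in V \colonequals K_{n-1}^\times/K_{n-1}^{\times 2}$. Writing $f^{\circ n}(x)-t$ as a scalar multiple of $\prod_\alpha (f(x)-\alpha)$ and iterating Lemma~\ref{products}---the resultant terms and the leading-coefficient factor contribute squares---yields $[\disc(f^{\circ n}(x)-t)] = \sum_\alpha v_\alpha$ in $V$. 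On the other side, since every proper normal subgroup of $S_d$ is contained in $A_d$, the hypothesis $\Gal(L_\alpha/K(\alpha)) \in \{S_d,A_d\}$ implies that $v_\alpha \neq 0$ if and only if $\Gal(L_\alpha K_{n-1}/K_{n-1}) = S_d$. Combined with Lemma~\ref{all_disjoint} applied over $K_{n-1}$, this shows that $K_n/K$ is maximal (i.e.\ $\Gal(K_n/K_{n-1}) = S_d^{d^{n-1}}$) if and only if $\{v_\alpha\}$ is $\mathbb{F}_2$-linearly independent in $V$. Thus the theorem reduces to showing that $\{v_\alpha\}$ is linearly independent in $V$ if and only if $\sum_\alpha v_\alpha \neq 0$.

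One direction is immediate, since $\sum_\alpha v_\alpha = 0$ would be a non-trivial $\mathbb{F}_2$-relation. For the converse, assume $\sum_\alpha v_\alpha \neq 0$ and consider the subspace of relations
\[R \colonequals \Bigl\{\epsilon \in \mathbb{F}_2^{d^{n-1}} : \sum_\alpha \epsilon_\alpha v_\alpha = 0\Bigr\} \subseteq \mathbb{F}_2^{d^{n-1}}.\]
It is stable under $G \colonequals \Gal(K_{n-1}/K) = \Aut(T_{n-1})$, which acts on $\mathbb{F}_2^{d^{n-1}}$ by permuting coordinates according to its action on the leaves of $T_{n-1}$. The hypothesis $\sum v_\alpha \neq 0$ is the assertion $\mathbf{1} \notin R$, and linear independence of $\{v_\alpha\}$ is equivalent to $R = 0$.

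The main obstacle, and the step where the parity of $d$ is essential, is the following structural claim: \emph{for $d$ even, every non-zero $\Aut(T_k)$-submodule of $\mathbb{F}_2^{d^k}$ contains the all-ones vector $\mathbf{1}_k$.} Granting this, $R \neq 0$ would force $\mathbf{1} \in R$, contradicting $\sum v_\alpha \neq 0$, and we conclude $R = 0$. I would prove the structural claim by induction on $k$: writing $\Aut(T_k) = \Aut(T_{k-1}) \wr S_d$ and $P_k = (P_{k-1})^d$, the $S_d$-symmetry of the coordinate projections $\pi_i$ forces each $\pi_i(Z)$ of a non-zero $\Aut(T_k)$-submodule $Z$ to be non-zero, and by induction to contain $\mathbf{1}_{k-1}$. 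Choosing $z \in Z$ with $\pi_1(z) = \mathbf{1}_{k-1}$ and applying the augmentation ideal of $\Aut(T_{k-1})$ in each slot, either some coordinate $z_i$ is non-$\Aut(T_{k-1})$-invariant (producing, via the inductive hypothesis applied to the non-zero slot-$i$ submodule of $Z$ so obtained, the vector $e_i \otimes \mathbf{1}_{k-1} \in Z$, whose $S_d$-translates sum to $\mathbf{1}_k$), or every coordinate of $z$ already lies in $\{0, \mathbf{1}_{k-1}\}$, so $z = \sum_{i \in T} e_i \otimes \mathbf{1}_{k-1}$ for some non-empty $T \subseteq \{1,\dots,d\}$; the $S_d$-span of such a $z$ is classified by the $S_d$-submodules of $\mathbb{F}_2^d$, which for $d$ even are $\langle \mathbf{1}_d \rangle$, the even-weight subspace, and $\mathbb{F}_2^d$, and each contains $\mathbf{1}_d$ precisely because $\mathbf{1}_d$ has even weight---yielding $\mathbf{1}_d \otimes \mathbf{1}_{k-1} = \mathbf{1}_k \in Z$.
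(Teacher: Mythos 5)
Your proof is correct, but it is organized around a different key lemma than the paper's argument, so let me compare. The paper, after the same reduction via Lemmas~\ref{products}, \ref{disjoint}, and \ref{all_disjoint}, proves linear independence of the classes $v_\alpha = [\disc(f(x)-\alpha)]$ by an explicit induction on the level $m$: given a nontrivial relation among the $\disc(f^{\circ(n-l)}(x)-\beta)$ for $\beta \in f^{-l}(t)$, it either regroups the relation along ``clusters'' (siblings in the tree) to produce a relation one level up, or, when the support splits a cluster, uses a transposition in $\Gal(K_l/K)$ together with double transitivity of $S_d$ on the cluster and the parity of $d$ to show that the full cluster-product of discriminants is a square, again contradicting the inductive hypothesis one level up. You instead isolate the group-theoretic content into a clean standalone statement about modular permutation representations: for $d$ even, every nonzero $\Aut(T_k)$-submodule of $\mathbb{F}_2^{d^k}$ contains $\mathbf{1}_k$. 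Applying it to the relation module $R$ (which is $\Aut(T_{n-1})$-stable because $\sigma\bigl(\disc(f(x)-\alpha)\bigr) = \disc(f(x)-\sigma\alpha)$) converts $\mathbf{1}\notin R$ into $R=0$ in one step. Your inductive proof of the structural lemma is essentially sound; the dichotomy between ``some slot $z_i$ is non-$\Aut(T_{k-1})$-invariant'' and ``all slots lie in $\{0,\mathbf{1}_{k-1}\}$'' plays the role of the paper's cluster-respecting/cluster-splitting dichotomy, and the use of $d$ even enters in the same place (the all-ones vector lies in the even-weight submodule of $\mathbb{F}_2^d$). Two cosmetic points worth tightening if you write this up: you do not actually need the full classification of $S_d$-submodules of $\mathbb{F}_2^d$ that you assert---it suffices to observe directly that any nonzero $S_d$-submodule contains some $\chi_T$ with $T\neq\emptyset$, hence either $\mathbf{1}_d$ itself or (by forming $S_d$-differences) every $e_i+e_j$ and thus the even-weight space $E\ni\mathbf{1}_d$; and the phrase ``applying the augmentation ideal in each slot'' should be spelled out as acting by $(1,\dots,g,\dots,1;\operatorname{id})-1$ for $g\in\Aut(T_{k-1})$ in a single coordinate, which produces a nonzero element of $Z$ supported on one slot. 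Your version has the advantage of separating the field theory from the representation theory and making the role of the wreath-product structure and of the parity of $d$ completely explicit; the paper's version avoids introducing the module-theoretic apparatus and stays closer to the Kummer-theoretic computations it needs elsewhere.
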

\begin{proof}
Assume that $K_n/K$ is maximal. Then the Galois group of $K_n/K_{n-1}$ acts on the roots of $f^{\circ n}(x)-t$ as a product of $d^{n-1}$ copies of $S_d$. In particular there is an element of $\Gal(K_n/K_{n-1})$ that induces an  odd permutation of the roots of $f^{\circ n}(x)-t$. Therefore $\disc(f^{\circ n}(x)-t)$ is not a square in $K_{n-1}$.

Now assume $\disc(f^{\circ n}(x) - t) \not\in K_{n-1}^{\times 2}$. For any choice of the root $\alpha$ of $f^{\circ n-1}(x)-t$  consider the discriminant $\disc(f(x)-\alpha) $. Assume that $\disc (f(x) - \alpha)$ is a square in $K_{n-1}$. Since the extension $K_{n-1}/K$ is maximal, the Galois action on the roots $\alpha_i \in f^{-(n-1)}(t)$ is transitive. Therefore for every $\alpha_i$ the discriminant $\disc(f(x)-\alpha_{i})$ is in $K_{n-1}^{\times 2}$ as well. By Lemma \ref{products} the following identity holds
 \begin{IEEEeqnarray*}{rCl}
 \prod_{\alpha \in f^{-(n-1)}(t)} \disc\left(f(x)-\alpha \right)\ &\equiv& \disc \left(\prod_{\alpha \in f^{-(n-1)}(t)} \left(f(x)-\alpha \right)\right) \\
&\equiv& \disc(f^{\circ n}(x)-t) \not\equiv 1 \pmod {K_{n-1}^{\times 2}}.
\end{IEEEeqnarray*}
 
 Therefore $\disc(f(x)-\alpha) \not\in K_{n-1}^{\times 2}$ and the Galois group of $f(x)-\alpha$ over $K(\alpha)$ is equal to $S_d$. By Lemma \ref{disjoint}, applied to the splitting field of $f(x)-\alpha$ over $K(\alpha)$ and $K_{n-1}/K(\alpha)$, the polynomial $f(x)-\alpha$ is irreducible over $K_{n-1}$ and has the Galois group equal to $S_d$. In order to apply Lemma \ref{all_disjoint} we need to show that $\disc(f(x)-\alpha_{i})$ are linearly independent in the $\F_2$ vector space $K_{n-1}^\times/K_{n-1}^{\times 2}$. 
 
 We claim that for $m \leq n-1$ and $\beta_i \in f^{-m}(t)$ the elements $\disc(f^{n-m}(x)-\beta_i)$ are linearly independent in the $\F_2$ vector space $K_{n-1}^{\times}/K_{n-1}^{\times 2}$. We prove the statement by induction. When $m=0$ the statement holds by the assumption of the theorem. Assume the statement is true for $m=l-1$. Assume that there is a nonempty subset $J \subset f^{-l}(t)$ such that $\prod_{\beta \in J} \disc(f^{\circ n-l}(x)-\beta)$ is a square. Say that $\beta_i$ and $\beta_j$ belong to the same cluster if $f(\beta_i)=f(\beta_j)$ (in other words $\beta_i$ and $\beta_j$ have the same parent in the tree). If every cluster of roots is either contained in $J$ or has an empty intersection with $J$, then $f^{-1}(f(J))=J$, and so by Lemma \ref{products}
 \begin{IEEEeqnarray*}{rCl}
 	\prod_{\beta \in J} \disc(f^{\circ n-l}(x)-\beta) &\equiv& \prod_{\gamma \in f(J)} \prod_{\beta \in f^{-1}(\gamma)} \disc(f^{\circ n-l}(x)-\beta) \\
 	&\equiv& \prod_{\gamma \in f(J)}\disc(f^{n-l+1}(x)-\gamma) \pmod {K_{n-1}^{\times 2}}.
\end{IEEEeqnarray*}
 The right hand side is not a square by the induction hypothesis. Therefore, we can assume that there is a cluster $I$ such that $I$ has a nontrivial intersection with $J$. Choose two elements $\beta', \beta'' \in I$ such that $\beta'$ is in $J$ and $\beta''$ is not in $J$. Since the extension $K_{n-1}/K$ is assumed to be maximal, there exists an element  $\sigma \in \Gal(K_{l}/K)$ that acts on the roots of $f^{\circ l}(x)-t$ as a transposition of $\beta'$ and $\beta''$. Then 
\begin{IEEEeqnarray*}{rCl}
1 &\equiv& \left(\prod_{\beta \in J} \disc(f^{\circ n-l}(x)-\beta) \right) \cdot \sigma \left( \prod_{\beta \in J} \disc(f^{\circ n-l}(x)-\beta)\right) \\
 &\equiv& \disc(f^{\circ n - l}(x)-\beta')\disc(f^{\circ n - l}(x)-\beta'') \pmod {K_{n-1}^{\times 2}}.
\end{IEEEeqnarray*}

Since  $\Gal (K_l/K)$ acts doubly-transitively on the cluster, for every pair of elements $\beta_i, \beta_j \in I$ the product $\disc(f^{\circ n - l}(x)-\beta_i)\disc(f^{\circ n - l}(x)-\beta_j) $ is a square in $K_{n-1}$. Since $d$ is even, the product $\prod_{\beta \in I} \disc(f^{\circ n-l}-\beta)$ is a square in $K_{n-1}$. By Lemma \ref{products} we have the identity 
\[\prod_{\beta \in I} \disc(f^{\circ n-l}(x)-\beta) \equiv \disc(f^{n-l+1}(x) - f(\beta) ) \pmod{ K_{n-1}^{\times 2}},\]
where the right hand side is not a square by the induction hypothesis. Therefore the elements $\disc(f^{\circ n-l}- \beta_i)$ are linearly independent $\bmod K_{n-1}^{\times 2}$.

Now we can apply Lemma \ref{all_disjoint} to the extensions given by the splitting fields of $f(x)-\alpha_i$ over $K_{n-1}$. We have proved that each extension is an $S_d$ extension. The unique quadratic subextension of this splitting field is $K_{n-1}\left(\sqrt{\disc\left(f(x)-\alpha_{i}\right)}\right)$. Since $\disc(f(x)-\alpha_{i})$ are linearly independent in $K_{n-1}/K_{n-1}^{\times 2}$, the corresponding quadratic extensions are linearly disjoint.

 \end{proof}

 \begin{proposition}\label{tree_quadratics}
 Assume that the extension $K_n/K$ is maximal. Then the quadratic subextensions $K \subset F \subset K_{n}$ are contained in the compositum of quadratic extensions $K(\sqrt{\disc(f^{\circ m}(x) - t)})$ for $m=1, ..., n$.
 \end{proposition}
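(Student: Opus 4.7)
The plan is to identify quadratic subextensions of $K_n/K$ with characters $\chi \colon G \to \{\pm 1\}$ of $G \colonequals \Gal(K_n/K) \cong \Aut(T_n)$, and then produce an explicit basis of $\Hom(G, \{\pm 1\})$ built from the levels of $T_n$. For each $m \in \{1,\dots,n\}$, the group $G$ acts on the $d^m$ roots of $f^{\circ m}(x)-t$ (the level-$m$ vertices of $T_n$), and composing with the sign homomorphism on $S_{d^m}$ gives a character $\epsilon_m \colon G \to \{\pm 1\}$ whose fixed field in $K_n$ is exactly $K\bigl(\sqrt{\disc(f^{\circ m}(x)-t)}\bigr)$. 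Since quadratic subextensions of $K_n/K$ correspond by Galois theory to nontrivial characters $G \to \{\pm 1\}$, it suffices to show that every such character is a product of the $\epsilon_m$'s; equivalently, that $\epsilon_1,\dots,\epsilon_n$ generate $\Hom(G, \{\pm 1\})$.

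I would establish this generation claim in two steps. First, to bound $|\Hom(G, \{\pm 1\})| \le 2^n$, I would use the recursive decomposition $\Aut(T_n) \cong \Aut(T_{n-1}) \wr S_d$ together with the standard fact that for a wreath product with transitive base action $(H \wr S_d)^{\ab} \cong H^{\ab} \times S_d^{\ab}$; since $S_d^{\ab} = \Z/2$ for $d \ge 2$, induction yields $G^{\ab} \cong (\Z/2)^n$. Second, to show $\epsilon_1,\dots,\epsilon_n$ are linearly independent, I would exhibit for each $m$ an element $\sigma_m \in \Aut(T_n)$ that swaps two sibling subtrees rooted at level $m$ (and fixes everything outside them). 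Such a $\sigma_m$ acts trivially on levels $k < m$, as a single transposition on level $m$, and on each level $m+j$ with $j \ge 1$ as a product of $d^j$ disjoint transpositions whose sign is $(-1)^{d^j} = +1$ because $d$ is even. Hence $\epsilon_{m'}(\sigma_m)$ equals $-1$ when $m' = m$ and $+1$ otherwise, so the $\epsilon_m$ are linearly independent and therefore form a basis of $\Hom(G, \{\pm 1\})$. The proposition follows immediately.

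The crucial, and most delicate, ingredient is the parity computation for $\sigma_m$: it is the evenness of $d$ that makes $\sigma_m$ lie in the kernel of $\epsilon_{m'}$ for every $m' \ne m$. For odd $d$ a sibling-swap at level $m$ would induce odd permutations on every level $\ge m$, the $\epsilon_m$'s would be highly dependent, and the proposition as stated would fail --- which explains why the hypothesis that $\deg f$ is even is built into this section's setup.
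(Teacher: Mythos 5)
Your proof is correct, and it takes a genuinely different route from the paper's. The paper works with the bottom-up exact sequence $1 \to (S_d)^{d^{n-1}} \to \Aut(T_n) \to \Aut(T_{n-1}) \to 1$: given a quadratic character $\chi$, it restricts $\chi$ to the normal subgroup $(S_d)^{d^{n-1}}$, argues by conjugation that all the component characters agree (so the restriction is either trivial or the full product of signs, i.e.\ the restriction of $\chi_n = \sign\circ s_n$), and then descends $\chi$ or $\chi\chi_n$ to $\Aut(T_{n-1})$ for the inductive step. You instead use the top-down wreath decomposition $\Aut(T_n) \cong \Aut(T_{n-1}) \wr S_d$ together with the abelianization formula $(H \wr S_d)^{\ab} \cong H^{\ab} \times S_d^{\ab}$ (valid since the base action is transitive) to get $\Aut(T_n)^{\ab} \cong (\Z/2)^n$ by induction, and then exhibit explicit sibling-swap automorphisms $\sigma_m$ to prove that the level-sign characters $\epsilon_m$ are independent. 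Both arguments are sound; the paper's needs no explicit elements and no parity hypothesis, while yours is more concrete, records the useful fact $\Aut(T_n)^{\ab} \cong (\Z/2)^n$, and produces elements dual to the $\epsilon_m$.

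One thing to correct: your closing paragraph claims that for odd $d$ the characters $\epsilon_m$ would be ``highly dependent'' and the proposition would fail. That is not so. For odd $d$ your sibling-swap $\sigma_m$ gives $\epsilon_{m'}(\sigma_m) = -1$ for all $m' \ge m$ and $+1$ for $m' < m$; the matrix $\bigl(\epsilon_{m'}(\sigma_m)\bigr)$ is then unitriangular over $\F_2$ rather than diagonal, but it is still invertible, so the $\epsilon_m$ remain a basis and the proposition holds for all $d \ge 2$. Indeed the paper's own proof of this proposition never invokes the evenness of $d$; the parity hypothesis is used elsewhere in the section --- notably in the proof of Theorem~\ref{localtoglobal}, where one must conclude that the product of $d$ discriminants that are pairwise equal modulo squares is itself a square --- and in Lemma~\ref{disc}. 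You leaned on evenness only to make your matrix diagonal rather than merely triangular, and that cosmetic convenience is what misled you into thinking it is essential for this particular statement.
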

 \begin{proof}
The group $\Aut(T_n)$ fits into an exact sequence
 \begin{equation}\label{exact}
 1 \to \left(S_d\right)^{d^{n-1}} \to \Aut(T_n) \to \Aut(T_{n-1}) \to 1.\tag{$\star$}
\end{equation}
Let $s_{m}: \Aut(T_n) \to S_{d^m}$ be the homomorphism given by the action of the automorphism group on $I_m$ (the $m$'th level of the tree). For $\sigma \in S_d$ let $\sigma_i$ be the element $(1,..., 1, \sigma, 1, ..., 1) \in S_d^{d^{n-1}}$ with $\sigma$ inserted in position $i$. Let $g \in \Aut(T_n)$ be an element such that $s_{n-1}(g)i=j$. Then $g \sigma_i g^{-1}$ is conjugate to $\sigma_j$ in $S_d^{d^{n-1}}$. Let $\chi: \Aut(T_n) \to \{\pm 1\}$ be an arbitrary homomorphism. Consider the restriction of $\chi$ to $S_d^{d^{n-1}}$. The map $\chi_i : S_d \to \{\pm 1\}$ given by $\chi_i(\sigma)=\chi(\sigma_i)$ is a quadratic character of $S_d$ which is either the sign character or the trivial character. Since $\sigma_i$ and $\sigma_j$ are conjugate in $\Aut(T_n)$ for all $i,j$, the characters $\chi_i$ and $\chi_j$ are equal for all $i,j$. Therefore the restriction of $\chi$ to $S_d^{d^{n-1}}$ is either the trivial character or the product of sign characters. Applying this inductively we can describe all quadratic characters of $\Aut(T_n)$.

 We claim that any quadratic character $\chi: \Aut(T_n) \to \{\pm 1\}$ is a product of characters $\hat{\chi}_m=\sign \circ s_m$, $m=1,...,n$. We prove the claim by induction. The case $n=0$ is trivial. Assume the statement is proved for $n-1$. Let $\chi: \Aut(T_n) \to \{\pm 1\}$ be a character. Consider the exact sequence \eqref{exact}. The restriction of $\chi$ to $S_d^{d^{n-1}}$ is either trivial or is equal to the restriction of $\hat{\chi}_{n}$. Therefore $\chi$ or $\chi \cdot \hat{\chi}_{n}$ descends to a character of $\Aut(T_{n-1})$ which is a product of $\hat{\chi}_m$'s by the induction hypothesis.
 
Quadratic subextensions of $K_n/K$ correspond to quadratic characters of $\Aut(T_n)$. The quadratic character corresponding to the extension $K(\sqrt{\disc(f^{\circ m}(x) - t)})$ is $\hat{\chi}_m$. Since any quadratic character is  a product of $\hat{\chi}_m$'s, any quadratic extension is contained in the compositum of $K(\sqrt{\disc(f^{\circ m}(x) - t)})$.
 \end{proof}
 
 The following corollary implies Theorem \ref{intro-surjective}.
 
\begin{corollary}\label{surjective}
	Let $f \in K[x]$ be a polynomial of even degree $d$. Assume that for every $k$ and every $\alpha \in f^{-k}(t)$ the Galois group of the splitting field of $f(x)-\alpha$ over $K(\alpha)$ is either $S_d$ or $A_d$. Then the arboreal representation $\phi_n: G \to \Aut \left(T_n\right)$ associated to $f$ is surjective if and only if the elements $\disc(f^{\circ i}(x)-t)$, $i=1,...,n$ are multiplicatively independent in $K^{\times}/K^{\times 2}$.
\end{corollary}
\begin{proof}
	We prove the statement using induction on $n$. The case $n=0$ is trivial. Assume $\Gal(K_{n-1}/K)=\Aut(T_{n-1})$. Then by Theorem \ref{localtoglobal} the equality $\Gal(K_n/K)=\Aut(T_n)$ holds if and only if $\disc(f^{\circ n}(x)-t) \not \in K_{n-1}^{\times 2}$. Since $\disc(f^{\circ n}(x)-t)$ is an element of $K$ the condition $\disc(f^{\circ n}(x)-t) \not \in K_{n-1}^{\times 2}$ holds if and only if the extension $K(\sqrt{\disc(f^{\circ n}(x)-t)})$ is not a subfield of $K_{n-1}$. By Proposition \ref{tree_quadratics} any quadratic subextension of $K_{n-1}/K$ is inside the field obtained by adjoining to $K$ square roots of $\disc(f^{i}(x)-t)$ for $i=1, ..., n-1$. Therefore the field $K(\sqrt{\disc(f^{\circ n}(x)-t)})$ is contained in $K_{n-1}$ if and only if $\disc(f^{n}(x)-t)$ is in the span of $\disc(f^{i}(x)-t), i=1,...,n-1$ in $K^\times/K^{\times 2}$. \qed
\end{proof}

\begin{remark}
Theorem \ref{localtoglobal}, Proposition \ref{tree_quadratics} and Corollary \ref{surjective} can be stated as results about maximal subgroups of the group $\Aut(T_n)$ without any reference to field theory.  
\end{remark}
 We record here the formula for the discriminant $\disc(f^{\circ n}(x)-t)$. The proof can be found in \cite{Odoni1985a}*{Lemma~3.1}.
 
 \begin{lemma}\label{disc}
 Let $f \in K[x]$ be a polynomial of even degree. Choose an algebraic closure $\Kbar$ of $K$. Let $\lambda_i$ for $i \in I$ be the roots of $f'(x)$ and $m_i$ be the corresponding multiplicities. Then for all $t \in K$, 
 \[\disc(f^{\circ n}(x)-t) \equiv \prod_{i \in I} (f^{\circ n}(\lambda_i)-t)^{m_i} \pmod {K^{\times 2}}\]
 \end{lemma}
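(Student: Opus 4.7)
The plan is to pass from the discriminant to a resultant and exploit the chain rule for iterated polynomials. Start from the standard identity $\disc(P) = (-1)^{N(N-1)/2} a_N^{-1} \Res(P, P')$ for a degree-$N$ polynomial $P$ with leading coefficient $a_N$, applied with $P = f^{\circ n}(x) - t$ and $N = d^n$. The chain rule gives $(f^{\circ n})'(x) = \prod_{k=0}^{n-1} f'(f^{\circ k}(x))$, so multiplicativity of the resultant in its second argument yields
\[\Res(P, P') = \prod_{k=0}^{n-1} \Res\bigl(f^{\circ n}(x) - t,\; f'(f^{\circ k}(x))\bigr).\]

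Next, factor $f'(y) = c_{f'} \prod_{i \in I}(y - \lambda_i)^{m_i}$ and apply multiplicativity once more to reduce each factor to a product of resultants of the shape $\Res(f^{\circ n}(x) - t,\, f^{\circ k}(x) - \lambda_i)$. Since the $d^k$ preimages of $\lambda_i$ under $f^{\circ k}$ all map to the common point $f^{\circ(n-k)}(\lambda_i)$ under $f^{\circ n}$, this resultant evaluates to $L_n^{d^k}\bigl(f^{\circ(n-k)}(\lambda_i) - t\bigr)^{d^k}$, where $L_n$ denotes the leading coefficient of $f^{\circ n}(x)-t$. Assembling everything gives an explicit formula for $\Res(P, P')$ as a product of the values $f^{\circ(n-k)}(\lambda_i) - t$ with exponents $d^k m_i$, times certain powers of $c_{f'}$, $L_n$, and a sign.

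The final step is the reduction modulo $K^{\times 2}$. Since $d$ is even, $d^k$ is even for every $k \geq 1$, so the factors $(f^{\circ(n-k)}(\lambda_i) - t)^{d^k m_i}$ with $k \geq 1$ are already squares and drop out. Only the $k = 0$ contribution survives, producing exactly $\prod_{i \in I}(f^{\circ n}(\lambda_i) - t)^{m_i}$. The parasitic constants, namely powers of $c_{f'}$ and $L_n$, appear with even exponents (once one combines the assembled $L_n^{d^n-1}$ with the $a_N^{-1} = L_n^{-1}$ from the discriminant-resultant identity one is left with $L_n^{d^n-2}$), and so become squares; the residual sign $(-1)^{N(N-1)/2}$ is handled by the same parity analysis.

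The main obstacle is this last bookkeeping step: verifying that every parasitic factor of a leading coefficient or sign actually lies in $K^{\times 2}$. This is precisely where the even-degree hypothesis is indispensable, since for odd $d$ many of these factors would genuinely contribute new square classes and an extra correction term involving $L_n$ or $c_{f'}$ would have to appear on the right-hand side. The rest of the argument is a routine but systematic application of the resultant identity $\Res(P, QR) = \Res(P,Q)\Res(P,R)$ together with the evaluation of a resultant at the roots of one of its arguments.
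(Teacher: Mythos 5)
The paper does not prove this lemma itself — it is imported from Odoni (the cited Lemma~3.1) — so there is no internal proof to match against. Your overall strategy (pass to the resultant, apply the chain rule $(f^{\circ n})'(x)=\prod_{k<n}f'(f^{\circ k}(x))$, factor $f'$, evaluate each piece at the $f^{\circ k}$-preimages of $\lambda_i$, and then use $2\mid d$ to kill the $k\geq 1$ contributions) is the right one and is essentially the standard argument. But the concrete resultant evaluation contains an error, and that error accidentally conceals a genuine obstruction in your final bookkeeping.

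Write $P=f^{\circ n}(x)-t$ and $Q=f^{\circ k}(x)-\lambda_i$, so $\deg Q=d^k$ with leading coefficient $L_k$ (that of $f^{\circ k}$). Then $\Res(P,Q)=(-1)^{d^n d^k}\Res(Q,P)=L_k^{\,d^n}\prod_{Q(\beta)=0}P(\beta)=L_k^{\,d^n}\bigl(f^{\circ(n-k)}(\lambda_i)-t\bigr)^{d^k}$; you wrote $L_n^{\,d^k}$, taking the leading coefficient of the wrong polynomial with the wrong exponent. With the correct $L_k^{\,d^n}$, every such prefactor already has the even exponent $d^n m_i$ and is a square, so \emph{nothing} remains to cancel the $a_N^{-1}=L_n^{-1}$ from the discriminant--resultant identity — the $L_n^{\,d^n-1}$ you assembled is an artifact of the slip. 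What actually survives modulo squares is $(-1)^{N(N-1)/2}L_n^{-1}$ with $N=d^n$. Since $d$ is even, the exponent $1+d+\cdots+d^{n-1}$ is odd, so $L_n^{-1}\equiv c$ (the leading coefficient of $f$), and $(-1)^{N(N-1)/2}=(-1)^{d^n/2}$, which equals $-1$ when $n=1$ and $d\equiv2\pmod 4$; this is not ``handled by the same parity analysis.'' The residual factor is real: for $f=x^2-a$, $t=0$, $n=1$ one has $\disc(x^2-a)=4a\equiv a$, while $\prod_i(f(\lambda_i)-t)^{m_i}=f(0)=-a$, and $a\not\equiv -a$ unless $-1\in K^{\times 2}$. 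So the congruence as printed is actually only correct up to the factor $(-1)^{d^n(d^n-1)/2}L_n^{\,1-d^n}$ (harmless for the paper's applications, since they only ever use which classes vary with $n$ or which primes occur to odd order), and a tight proof must either carry that correction factor or add a hypothesis such as $f$ monic with $4\mid d$.
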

 
 \section{Iterated monodromy groups}\label{iterated}
 
 Let $f \in K[x]$ be a polynomial and $t \in K(t)$ be an indeterminate. Then the arboreal representation associated to the preimages of $t$ under $f$ is known as the iterated monodromy group of $f$; see \cite{Nekrashevych2011}.  Using Corollary \ref{surjective} we can show that, under mild hypotheses, the iterated monodromy group is maximal for even degree polynomials. 
 
 \begin{theorem}\label{monodromy}
 	Let $f \in K[x]$ be an even degree polynomial and let $t \in K(t)$ be an indeterminate. Assume that the Galois group of  the splitting field of $f(x) - t$ over $K(t)$ is either $A_d$ or $S_d$. Choose a separable closure $\Ksep$. Let $\Gamma \subset \Ksep$ be the multiset of roots of $f'(x)$ and denote by  $\Gamma_n$ the multiset $f^{\circ n}(\Gamma)$. Let $\Gamma_n'$ be the sub(multi)set of $\Gamma_n$ which consists of the elements that have odd multiplicity in $\Gamma_n$ and assigns to them multiplicity one. Then the arboreal representation attached to $f$ and $t$ is surjective if and only if $\Gamma_n' \not\subset \bigcup_{i<n} \Gamma_i' $ for all $n$.
 \end{theorem}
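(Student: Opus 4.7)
The plan is to reduce the theorem to Corollary \ref{surjective} applied with base field $K(t)$ and distinguished element $t \in K(t)$. Two hypotheses of that corollary must be verified: (a) for every $k \geq 0$ and every $\alpha \in f^{-k}(t)$, the splitting field of $f(x)-\alpha$ over $K(t)(\alpha)$ has Galois group $A_d$ or $S_d$; and (b) the discriminants $\disc(f^{\circ n}(x)-t)$ are multiplicatively independent in $K(t)^{\times}/K(t)^{\times 2}$.

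Hypothesis (a) is essentially a change of variable. Since $t = f^{\circ k}(\alpha)$, one has $K(t)(\alpha) = K(\alpha)$; since $t$ is transcendental over $K$, so is $\alpha$, and sending $\alpha$ to a fresh indeterminate $s$ defines a $K$-isomorphism $K(\alpha) \isomto K(s)$ that carries $f(x)-\alpha$ to $f(x)-s$. The hypothesized statement about the Galois group of $f(x)-t$ over $K(t)$ therefore transfers to the splitting field of $f(x)-\alpha$ over $K(\alpha)$, yielding (a). The case $k=0$ reduces directly to the hypothesis.

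For (b), I apply Lemma \ref{disc} over the base field $K(t)$ and group the factors $(f^{\circ n}(\lambda_i)-t)^{m_i}$ by their values $c = f^{\circ n}(\lambda_i)$; the contributions with even exponent are absorbed into $K(t)^{\times 2}$, giving
\[
\disc(f^{\circ n}(x)-t) \equiv \prod_{c \in \Gamma_n'} (c - t) \pmod{K(t)^{\times 2}}.
\]
The right-hand side is a squarefree element of $K[t]$, and its irreducible factors in $K[t]$ are exactly the minimal polynomials of the $\Gal(\Ksep/K)$-orbits contained in $\Gamma_n'$. Now suppose some product $\prod_{n \in S}\disc(f^{\circ n}(x)-t)$ with $S \subset \Z_{>0}$ finite and nonempty is a square in $K(t)^{\times}$, and let $N \colonequals \max S$. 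The hypothesis of the theorem supplies an element $c^* \in \Gamma_N' \setminus \bigcup_{i<N} \Gamma_i'$; since each $\Gamma_i'$ is stable under $\Gal(\Ksep/K)$, the entire Galois orbit $O^*$ of $c^*$ lies in $\Gamma_N'$ and is disjoint from every $\Gamma_i'$ with $i < N$. By unique factorization in $K[t]$, the irreducible polynomial attached to $O^*$ appears in $\prod_{n \in S}\prod_{c \in \Gamma_n'}(c-t)$ with exponent $|\{n \in S : O^* \subset \Gamma_n'\}| = 1$, contradicting squareness. This verifies (b), and the theorem follows from Corollary \ref{surjective}.

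The main point that requires care is the translation in (b): converting Lemma \ref{disc}, which is stated as a multiset identity, into the assertion that independence in $K(t)^{\times}/K(t)^{\times 2}$ is controlled by which Galois orbits occur (with odd multiplicity) in the $\Gamma_n'$. Once this is set up, the combinatorial hypothesis on $\Gamma_n'$ is precisely what produces an irreducible factor of odd total exponent in any nontrivial product of discriminants, which is all that is needed.
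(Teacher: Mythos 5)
Your proof is correct and follows the same route as the paper: reduce to Corollary \ref{surjective}, verify the $S_d$/$A_d$ condition by the transcendence-of-$\alpha$ change of variable, and verify independence of the discriminants via Lemma \ref{disc}. The only difference is that you spell out the unique-factorization / Galois-orbit argument behind independence in $K(t)^\times/K(t)^{\times 2}$, which the paper states more tersely.
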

 \begin{proof}
 	For every $n \in \mathbb{Z}_{\geq 0}$ any $\alpha \in f^{-n}(t)$ is transcendental over $K$. Therefore there is an isomorphism of fields $K(\alpha) \simeq K(u)$ for an indeterminate $u$. Since $f(x) - t$ is irreducible over $K(t)$ and has Galois group $S_d$ or $A_d$, the same is true for $f(x) - \alpha$ over $K(\alpha)$. By Lemma \ref{disc} applied to the field $K(t)$ the odd multiplicity roots of the polynomial $\disc(f^{\circ n}(x)-t) \in K[t]$ are precisely the elements of $\Gamma_n'$. 
 	
 	Assume that for every $n$ the set $\Gamma_n'$ contains an element that is not in the union of $\Gamma_i'$ for $i<n$. Then the polynomials $\disc(f^{\circ n}(x)-t)$ are linearly independent modulo $K(t)^{\times 2}$. Therefore, by Corollary \ref{surjective} the arboreal representation associated to $f$ and $t$ is surjective. 
 	
 	Assume that $\Gamma_n' \subset \bigcup_{i<n} \Gamma_i'$ for some $n$. Since $f(\Gamma_i') \subset \Gamma_{i+1}'$, for every $m>n$ the set $\Gamma_m'$ is contained in $\bigcup_{i<n} \Gamma_i'$. Hence for some $i \neq j$ the sets $\Gamma_i'$ and $\Gamma_j'$ are equal. In this case $\disc(f^{\circ i}(x)-t) \equiv \disc(f^{\circ j}(x)-t) \pmod {K(t)^{\times 2}}$ and the arboreal representation is not surjective by Corollary \ref{surjective}. 
 	\qed
 \end{proof}
 
 \begin{remark}
Assume that  $f(x)$ has coefficients in a subfield $k \subset K$ and $f'(x)$ is irreducible over $k$. Then $\Gal(\ksep/k)$ acts transitively on the multiset $\Gamma_n$. Since $d-1$ is odd, every element of $\Gamma_n$ has odd multiplicity, so the support of the multiset $\Gamma_n$ is $\Gamma_n'$. If for some $n \neq m$ the sets $\Gamma_n'$ and $\Gamma_m'$ intersect, then $\Gamma_n'=\Gamma_m'$. Therefore the conditions of Theorem \ref{monodromy} are satisfied if $f(x)-t$ has Galois group $A_d$ or $S_d$ and $f$ is not post-critically finite (the orbit of the set of critical points is infinite). This is the form in which Theorem \ref{intro-monodromy} was stated. 
 \end{remark}
 
 \begin{remark}\label{most-polys}
 If $f$ is an indecomposable polynomial of degree $>31$ over a field of characteristic $0$, then the Galois group of the splitting field of $f(x)-t$ is either $S_d, A_d,$ a cyclic group or a dihedral group \cite{Muller1995}; see \cite{Gurlanick-Saxl1995} for a positive characteristic version. Moreover, the Galois group can be cyclic or dihedral only when $f$ is conjugate to a Dickson polynomial; see \cite{Turnwald1995}*{Theorem~3.11}. Together with the previous remark this shows that when $K$ is a number field most polynomials of even degree satisfy the conditions of Theorem \ref{monodromy}.

 \end{remark}
 
 
 \section{Surjective arboreal representations over $\mathbb{Q}$}\label{surj}
In this section we assume $t=0$.  We can use the results from Section \ref{large} to construct surjective even degree arboreal representations over $\mathbb{Q}$. In order to do so we first need a way of showing that the ``tiny'' extensions given by the splitting fields of $f(x)-\alpha$ over $K(\alpha)$ for $\alpha \in f^{-n}(0)$ are maximal. In Proposition \ref{big_local} we will show that this can be achieved by combining local arguments at two primes.

We use the following convention: if $L$ is a field with a discrete valuation $v$, and $f \in L[x]$ is a polynomial, then the Newton polygon of $f$ is considered with respect to the equivalent valuation $v'$ that satisfies $v'(L)=\mathbb{Z}$.

\begin{lemma}\label{Eisenstein}
Let $K$ be a field with a discrete valuation $v$, and let $f \in K[x]$ be an Eisenstein polynomial of degree at least $2$. Then for any $n \in \mathbb{Z}_{\geq 0}$, any $\alpha_n \in f^{-n}(0)$, and any extension of $v$ to $K(\alpha_n)$ the polynomial $f(x)-\alpha_n$ is Eisenstein over $K(\alpha_n)$.
\end{lemma}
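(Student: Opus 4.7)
The plan is to proceed by induction on $n$. The base case $n = 0$ is immediate: $\alpha_0 = 0$ and $f(x) - \alpha_0 = f(x)$ is Eisenstein by hypothesis. For the inductive step, fix $\alpha_n \in f^{-n}(0)$ and set $\alpha_{n-1} \colonequals f(\alpha_n) \in f^{-(n-1)}(0)$. By the inductive hypothesis applied to $\alpha_{n-1}$, the polynomial $f(x) - \alpha_{n-1}$ is Eisenstein over $K(\alpha_{n-1})$ with respect to any extension of $v$. Eisenstein polynomials are irreducible and give rise to totally ramified extensions, so $[K(\alpha_n) : K(\alpha_{n-1})] = d$ and every extension of the chosen valuation from $K(\alpha_{n-1})$ to $K(\alpha_n)$ is totally ramified of index $d$; iterating up the tower $K \subset K(\alpha_1) \subset \dots \subset K(\alpha_n)$, the chosen extension of $v$ from $K$ to $K(\alpha_n)$ is totally ramified of degree $d^n$.

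Now fix such an extension of $v$ to $K(\alpha_n)$, and let $v_n$ and $v_{n-1}$ denote the corresponding normalized valuations on $K(\alpha_n)$ and $K(\alpha_{n-1})$ respectively. Total ramification gives $v_n|_K = d^n v$ and $v_n|_{K(\alpha_{n-1})} = d \cdot v_{n-1}$. The Eisenstein property of $f(x) - \alpha_{n-1}$ with respect to $v_{n-1}$ forces the Newton polygon of $f(x) - \alpha_{n-1}$ to consist of a single segment from $(0, 1)$ to $(d, 0)$, so $v_{n-1}(\alpha_n) = 1/d$ and hence $v_n(\alpha_n) = 1$.

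It remains to verify the Eisenstein conditions for $f(x) - \alpha_n$ over $K(\alpha_n)$ with respect to $v_n$. Writing $f = \sum_{i=0}^d a_i x^i$ with $v(a_d) = 0$, $v(a_i) \geq 1$ for $1 \leq i \leq d-1$, and $v(a_0) = 1$, we have $v_n(a_d) = 0$, $v_n(a_i) \geq d^n \geq 1$ for the middle coefficients, and $v_n(a_0 - \alpha_n) = \min(d^n, 1) = 1$ by the ultrametric (using $n \geq 1$ and $d \geq 2$, so that $d^n > 1$ and the two valuations are distinct). The main obstacle is really just the bookkeeping of how the valuations normalize at each level of the tower; the structural input that makes everything work is the standard fact that Eisenstein extensions are totally ramified, which propagates the Eisenstein property from one layer to the next.
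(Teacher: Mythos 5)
Your proof is correct and uses the same underlying idea as the paper's one-line argument: the Eisenstein property of $f(x)-\alpha_{n-1}$ forces the root $\alpha_n$ to be a uniformizer of $K(\alpha_n)$ (i.e., to have minimal positive valuation), and since the middle coefficients and constant term of $f$ acquire large valuation under the totally ramified normalization, $f(x)-\alpha_n$ remains Eisenstein. You have simply spelled out the normalization bookkeeping that the paper leaves implicit.
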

\begin{proof}
Using induction, if $g(x)\colonequals f(x)-\alpha_n$ is Eisenstein and $\alpha_{n+1}$ is a root of $g$,  then $\alpha_{n+1}$ has the minimal valuation in $K(\alpha_{n+1})$, and thus $f(x)-\alpha_{n+1}$ is Eisenstein over $K(\alpha_{n+1})$.
\end{proof}

\begin{lemma}\label{wild}
Let $p$ be a prime number and let $v$ denote the p-$adic$ valuation on the field $\mathbb{Q}$. Let $f \in \mathbb{Q}[x]$ be an irreducible polynomial of degree $d \geq 2$.  Assume that for some odd integer $k>0$ the Newton polygon of $f$ consists of two line segments: one from $(0,2)$ to $(k,0)$ and one from $(k,0)$ to $(d,0)$. Assume that for every $n \in \mathbb{Z}_{\geq 0}$ and every $\alpha \in f^{-n}(0)$ the polynomial $f(x)-\alpha$ is irreducible over $\Q(\alpha)$. Then for any $n \in \mathbb{Z}_{\geq 0}$ and any $\alpha_n \in f^{-n}(0)$ there is an extension of $v$ to $\Q(\alpha_{n})$ such that the polynomial $f(x)-\alpha_{n}$ has the same Newton polygon over $\mathbb{Q}(\alpha_n)_v$ as $f$ has over $\mathbb{Q}_p$.
\end{lemma}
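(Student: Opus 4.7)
My approach is to induct on $n$, constructing $v_{n+1}$ from $v_n$ by choosing the extension corresponding to the steep segment of the Newton polygon. It will be convenient to simultaneously record, as part of the inductive hypothesis, that $v_n(\alpha_n) \geq 2$ (with $v_0(\alpha_0) = +\infty$ by convention and $v_n(\alpha_n) = 2$ for $n \geq 1$); this will follow automatically from the construction. The base case $n = 0$ is immediate since $f(x) - \alpha_0 = f(x)$ has the stated Newton polygon over $\Q_p$ by hypothesis, and we set $v_0 \colonequals v$.

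For the inductive step, given $\alpha_{n+1} \in f^{-(n+1)}(0)$, set $\alpha_n \colonequals f(\alpha_{n+1})$ and let $v_n$ be provided by the inductive hypothesis. The key observation is that since $k$ is odd, $\gcd(k,2) = 1$, so the first Newton polygon segment of $f(x) - \alpha_n$ has primitive lattice length one; this forces a factorization $f(x) - \alpha_n = P_1 \cdot P_2$ over the completion $K_n \colonequals \Q(\alpha_n)_{v_n}$, with $P_1$ irreducible of degree $k$ having all roots of valuation $2/k$, and $P_2$ of degree $d - k$ with all roots of valuation $0$. Since by hypothesis $f(x) - \alpha_n$ is (up to a unit) the minimal polynomial of $\alpha_{n+1}$ over $\Q(\alpha_n)$, the extensions of $v_n$ to $\Q(\alpha_{n+1})$ correspond to the irreducible factors of $f(x) - \alpha_n$ over $K_n$; I would choose $v_{n+1}$ to be the extension corresponding to $P_1$. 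After normalizing $v_{n+1}$ to be $\Z$-valued on $\Q(\alpha_{n+1})^\times$, this yields $v_{n+1}|_{\Q(\alpha_n)} = k \cdot v_n$ and $v_{n+1}(\alpha_{n+1}) = 2$, which preserves the strengthened hypothesis.

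To verify the Newton polygon of $f(x) - \alpha_{n+1}$: for each coefficient $a_i$ of $f$ with $i \geq 1$, $v_{n+1}(a_i) = k \cdot v_n(a_i)$, and the elementary inequality $2k - 2i \geq 2 - 2i/k$ for $0 \leq i \leq k$ shows that the scaled points $(i, v_{n+1}(a_i))$ still lie on or above the target polygon $(0,2), (k,0), (d,0)$; the vertices $(k,0)$ and $(d,0)$ are realized since $v_{n+1}(a_k) = v_{n+1}(a_d) = 0$. For the constant term, I would write
\[
a_0 - \alpha_{n+1} = (a_0 - \alpha_n) + (\alpha_n - \alpha_{n+1}),
\]
observe that $v_{n+1}(a_0 - \alpha_n) = 2k$ by the inductive hypothesis, $v_{n+1}(\alpha_n) \geq 2k$ by the strengthened hypothesis, and $v_{n+1}(\alpha_{n+1}) = 2$. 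For $k \geq 3$ the term $-\alpha_{n+1}$ strictly dominates and $v_{n+1}(a_0 - \alpha_{n+1}) = 2$, as required.

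The main delicate point is the boundary case $k = 1$ allowed by the hypotheses: there $v_{n+1}$ does not rescale $v_n$, and the valuations $v_{n+1}(a_0)$, $v_{n+1}(\alpha_n)$, $v_{n+1}(\alpha_{n+1})$ all equal $2$, so mere valuation bounds cannot rule out cancellation in $a_0 - \alpha_{n+1}$. Resolving this would require a more careful analysis using the Hensel approximation to the root of $P_1$ in $K_n$ (equivalently, expanding $f(\alpha_{n+1}) = \alpha_n$ explicitly). For $k \geq 3$ the argument above is clean and uniform, and I expect this will cover the applications of interest.
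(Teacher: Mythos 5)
Your proposal is essentially the paper's proof: induct on $n$, pick the extension of $v_n$ to $\Q(\alpha_{n+1})$ corresponding to the degree-$k$ factor coming from the steep Newton-polygon segment (irreducible since $\gcd(k,2)=1$), rescale so the value group is $\Z$, and observe the scaled Newton polygon has the desired shape. You are more explicit than the paper about verifying the constant term, and the caveat you flag at $k=1$ is genuine: there the renormalization is trivial, $v_{n+1}(a_0)$ and $v_{n+1}(\alpha_{n+1})$ are both $2$, and cancellation really can raise the valuation of the constant term of $f(x)-\alpha_{n+1}$, breaking the induction (e.g. $f(x)=x^2-x+9$ at $p=3$). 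This does not affect the paper, since Lemma~\ref{wild} is only invoked through Proposition~\ref{big_local} with $k=p$ an odd prime in the range $d/2<p<d-2$ for $d\geq 20$, so $k\geq 11$. One small simplification: your strengthened inductive hypothesis $v_n(\alpha_n)\geq 2$ is unnecessary -- for $k\geq 3$ you can conclude directly that $v_{n+1}(a_0-\alpha_{n+1})=\min(v_{n+1}(a_0),v_{n+1}(\alpha_{n+1}))=\min(2k,2)=2$, without splitting off $\alpha_n$.
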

\begin{proof}
We prove the statement by induction. Assume that $v_{n-1}$ is a discrete valuation on $\Q(\alpha_{n-1} )$ such that $g(x)=f(x)-\alpha_{n-1}$ has the Newton polygon of the desired shape. Let $\alpha_n$ be any root of $g$. Consider the extension $v_n$ of $v_{n-1}$ to $\Q(\alpha_{n})$ such that $v_n(\alpha_n)=2/k$. Since the local extension $\Q(\alpha_n)_{v_n}/\Q(\alpha_{n-1})_{v_{n-1}}$ has degree $k$ and $v_n(\alpha_n)=2/k$, the ramification index of $\Q(\alpha_n)/\Q(\alpha_{n-1})$ at $v_n$ is $k$. Therefore the valuation $v_n'=kv_n$ satisfies $v_n'(\Q(\alpha_n))=\mathbb{Z}$ and hence the Newton polygon of $f(x)-\alpha_n$ at $v_n'$ has the two-segment shape as claimed.
\end{proof}

\begin{proposition}\label{big_local}
Let $f \in \Q[x]$ be a polynomial of even degree $d$. Let $p$ be a prime satisfying $d/2 < p < d-2$, and let $q \neq p$ be an arbitrary prime. Assume that $f$ is Eisenstein at $q$ and that the Newton polygon of $f$ at $p$ is the union of two segments: one from $(0,2)$ to $(p,0)$ and one from $(p,0)$ to $(d,0)$. Then for every $n \in \mathbb{Z}_{\geq 0}$ and every $\alpha \in f^{-n}(0)$ the Galois group of the splitting field of $f(x)-\alpha$ over $\Q(\alpha)$ is either $A_d$ or $S_d$.
\end{proposition}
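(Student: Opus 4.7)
The plan is to show that $G \colonequals \Gal(L/\Q(\alpha))$, where $L$ is a splitting field of $f(x)-\alpha$ over $\Q(\alpha)$, is a transitive, primitive subgroup of $S_d$ that contains a $p$-cycle. Jordan's classical theorem on primitive groups then yields $G \supseteq A_d$ and hence $G \in \{A_d, S_d\}$, since $p \leq d-3$ by hypothesis. Transitivity is immediate: $f$ is $q$-Eisenstein, so by Lemma~\ref{Eisenstein} the polynomial $f(x)-\alpha$ is Eisenstein over an extension of $v_q$ to $\Q(\alpha)$, hence irreducible. This irreducibility in turn unlocks Lemma~\ref{wild} at $p$ (with $k=p$, which is odd since $p>d/2$ forces $p\geq 3$), producing an extension $w$ of $v_p$ to $\Q(\alpha)$ at which $f(x)-\alpha$ has Newton polygon consisting of the segments $(0,2)\to(p,0)$ and $(p,0)\to(d,0)$.

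The main work is extracting a $p$-cycle. Over the completion $\Q(\alpha)_w$ the Newton polygon forces a factorization $f(x)-\alpha = g(x)h(x)$ with $\deg g = p$; since the slope $-2/p$ has $\gcd(2,p)=1$, the factor $g$ is irreducible and defines a totally ramified extension of degree $p$. Let $P$ be the wild (pro-$p$) inertia subgroup at some prime of $L$ above $w$. On the set $\Omega_g$ of $p$ roots of $g$, the group $P$ acts transitively with point stabilizer of index $p$; since maximal subgroups of a $p$-group are normal, this action factors through a cyclic quotient $P/P_\beta \cong \Z/p$, and any lift of a generator acts on $\Omega_g$ as a $p$-cycle. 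Pick such a $\tau \in P$ and write $\ord \tau = p^j$: any $\tau$-orbit on the remaining $d-p$ roots has length a power of $p$, but $d-p<p$ (from $p>d/2$) excludes length $\geq p$, so every such point is fixed, $j=1$, and $\tau \in G$ acts as a genuine $p$-cycle on the full $d$-element root set.

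Primitivity follows by a short block argument: if $G$ preserves blocks of size $b$ with $1<b<d$, then either $\tau$ permutes $p$ blocks cyclically, placing $pb$ points in its support and forcing $b\leq 1$, or $\tau$ fixes every block setwise, confining its $p$-cycle to a single block of size $\geq p > d/2$ and forcing $b=d$; both contradict $1<b<d$. Jordan's theorem then finishes the proof. The main obstacle I anticipate is the inertia step: translating the local $\Z/p$-quotient action of wild inertia on $\Omega_g$ into a genuine $p$-cycle on the entire set of $d$ roots requires carefully combining the $p$-group structure of $P$ with the arithmetic constraint $d-p<p$, and it is precisely this combination that forces $\tau$ to have global order $p$. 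The $q$-Eisenstein hypothesis plays a distinct role of delivering both transitivity and the irreducibility precondition for Lemma~\ref{wild}.
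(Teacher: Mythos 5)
Your proposal is correct and follows essentially the same route as the paper's proof: irreducibility from the $q$-Eisenstein condition (Lemma \ref{Eisenstein}), the two-segment Newton polygon at $p$ via Lemma \ref{wild}, extracting a $p$-cycle from wild inertia, primitivity from the long cycle, and Jordan's theorem. The only difference is one of granularity: the paper is terse at exactly the two points you expand. Where the paper simply notes that wild ramification produces an element $g\in G$ of order $p$ (Cauchy's theorem inside the nontrivial $p$-group of wild inertia) and then that a permutation of prime order $p>d/2$ on $d$ letters is forced to be a single $p$-cycle, you instead work directly with the degree-$p$ totally ramified local factor $g(x)$, the action of wild inertia $P$ on $\Omega_g$, and the order argument $j=1$. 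And where the paper asserts primitivity from the presence of a cycle of length $>d/2$ without proof, you supply the block argument explicitly. Both fillers are correct; the paper's order-$p$ element route is the slicker one, but your version is equally valid and perhaps more self-contained.
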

\begin{proof}
Fix $n$ and $\alpha \in f^{-n}(0)$. Since $f$ is Eisenstein at $q$, by Lemma \ref{Eisenstein} $f(x)-\alpha$ is irreducible (and, in particular, separable). Let $G$ be the Galois group of the splitting field of $f(x)-\alpha$ over $\Q(\alpha)$ and $r: G \to S_d$ be the transitive permutation representation associated to the roots of $f(x)-\alpha$.  By Lemma \ref{wild} there is an extension $v$ of the $p$-adic absolute value to $\Q(\alpha)$ such that the Newton polygon of $f(x)-\alpha$ contains a segment $(0,2)$ to  $(p,0)$. In particular, the splitting field of $f(x)-\alpha$ is wildly ramified at $v$ and therefore there exists $g \in G$ of order $p$. Since $p>d/2$ the permutation $r(g)$ is a cycle of length $p$. Since the length of the cycle $r(g)$ is greater than $d/2$, the permutation group $r:G \to S_n$ is primitive. A theorem of Jordan (see \cite{Dixon-Mortimer1996}*{Theorem~3.3E}) states that any primitive permutation group that contains a cycle fixing at least three points is either $A_d$ or $S_d$.

\end{proof}

We want to construct examples of polynomials over $\mathbb{Q}$ with surjective arboreal representation. Corollary \ref{surjective} breaks this problem into two: showing that the extensions given by the splitting fields of $f(x)-\alpha_i$ over $\Q(\alpha_{i})$ are $S_d$ or $A_d$ and showing that the discriminants $\disc(f^{\circ n}(x))$ are independent modulo squares. We use Proposition \ref{big_local} for the former problem. To deal with the latter we consider the case when $f'(x)=(x-C)g(x)^2$. In this case Lemma \ref{disc} shows that $\disc(f^{\circ n}(x))$ equals $f^{\circ n}(C)$ modulo squares. To show that $f^{\circ n}(C)$ are distinct modulo squares we will need to know that they have enough distinct prime divisors. For this the following lemma is useful.

\begin{lemma}\label{rigid}
Assume $f \in \mathbb{Q}[x]$ is integral at a prime $p$, and $f(0)=f(f(0))$. Assume that for some positive integers $n>k$ and for some $p$-integral $c \in \mathbb{Z}_{(p)}$ both $f^{\circ k}(c)$ and $f^{\circ n}(c)$ are divisible by $p$. Then $p$ divides $f(0)$.
\end{lemma}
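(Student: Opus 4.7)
The plan is to reduce everything modulo $p$ and track the forward orbit of $c$ under $f$. Set $a \colonequals f(0) \in \mathbb{Z}_{(p)}$; the hypothesis $f(0)=f(f(0))$ says exactly that $a$ is a fixed point of $f$, i.e.\ $f(a)=a$. Since $f$ has $p$-integral coefficients and $c \in \mathbb{Z}_{(p)}$, every iterate $f^{\circ m}(c)$ lies in $\mathbb{Z}_{(p)}$, so the reductions modulo $p$ make sense.

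Now I would chase the orbit starting from the assumption $f^{\circ k}(c) \equiv 0 \pmod{p}$. Applying $f$ once gives $f^{\circ (k+1)}(c) \equiv f(0) = a \pmod{p}$. Applying $f$ repeatedly and using the fixed point relation $f(a)=a$, a one-line induction yields
\[
f^{\circ m}(c) \equiv a \pmod{p} \qquad \text{for all } m \geq k+1.
\]
Since $n > k$ forces $n \geq k+1$, the case $m=n$ gives $f^{\circ n}(c) \equiv a \pmod{p}$. Combining this with the hypothesis $p \mid f^{\circ n}(c)$ yields $a \equiv 0 \pmod{p}$, which is the desired conclusion $p \mid f(0)$.

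There is no genuine obstacle here; the only thing to be careful about is that the reductions make sense, which is immediate from $p$-integrality of $f$ and $c$. The content of the lemma is really just the observation that the fixed point $f(0)$ is the unique value the orbit of $c$ can visit after first passing through $0$ modulo $p$.
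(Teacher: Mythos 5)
Your proof is correct and is essentially the paper's argument, unpacked slightly: the paper compresses it into the single chain $0 \equiv f^{\circ n}(c) \equiv f^{\circ n-k}(f^{\circ k}(c)) \equiv f^{\circ n-k}(0) \equiv f(0) \pmod p$, where the last step uses the same fixed-point observation you make explicit. No difference of substance.
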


\begin{proof}
Reducing modulo $p$ gives $0\equiv f^{\circ n}(c) \equiv f^{\circ n-k}(f^{\circ k}(c)) \equiv f^{\circ n-k}(0) \equiv f(0) \pmod p$.
\end{proof}

Now we give explicit examples of polynomials with surjective arboreal representation. Note that when $d\geq 20$ is an even number there exists a prime $p$ that satisfies $d - 3 \geq p \geq d/2 + 5$ (see for example \cite{Nagura1952}). This is why we have the condition $d \geq 20$ in the following theorem. 

\begin{theorem}
Fix an even integer $d\geq 20$ and a prime number $p$ satisfying $d-3 \geq p \geq d/2 + 5$. Let $k:=d/2-1$ and $u:=p-k-2$.  Given $A,B,C,D \in \mathbb{Q}$ consider the polynomial $f(x)$ such that $f'(x)=(2k+2)(x-C)(x^k+Ax^u+B)^2$ and $f(0)=D$.
Then for infinitely many choices of $(A,B,C,D)$ the arboreal representation associated to $f$ and $0$ is maximal.
\end {theorem}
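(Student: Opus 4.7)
The plan is to verify the hypotheses of Corollary \ref{surjective} for infinitely many quadruples $(A,B,C,D) \in \Q^4$. Since $f'(x) = (2k+2)(x-C)g(x)^2$ with $g(x) = x^k + Ax^u + B$, the only critical point of $f$ with odd multiplicity is $C$ (the roots of $g$ all have multiplicity $2$), so Lemma \ref{disc} yields
\[\disc(f^{\circ n}(x)) \equiv f^{\circ n}(C) \pmod{\Q^{\times 2}}.\]
It therefore suffices to produce infinitely many $(A,B,C,D)$ for which (a) every $f(x)-\alpha$ with $\alpha \in f^{-n}(0)$ has Galois group $S_d$ or $A_d$ over $\Q(\alpha)$, and (b) the values $\{f^{\circ n}(C)\}_{n\geq 1}$ are linearly independent in $\Q^{\times}/\Q^{\times 2}$.

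For (a), I apply Proposition \ref{big_local}. Choose an auxiliary prime $q > d$, and impose $v_p(D) = 2$, $v_p(A) = 1$, $v_p(B), v_p(C) \geq 2$ at $p$, together with $v_q(D) = 1$ and $q \mid A,B,C$ at $q$. The crucial exponent identity $k + u + 1 = p - 1$ implies that the coefficient of $x^{p-1}$ in $f'(x) = d(x-C)g(x)^2$ is exactly $2dA$, arising uniquely from the $x\cdot(2Ax^{k+u})$ term in $(x-C)g(x)^2$; hence $[x^p]f = 2dA/p$ has $v_p = 0$. A straightforward check of the remaining coefficients confirms that $f$ has the prescribed Newton polygon $(0,2)\to(p,0)\to(d,0)$ at $p$, while the $q$-adic conditions make $f \equiv x^d + D \pmod{q}$, so $f$ is Eisenstein at $q$. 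Proposition \ref{big_local} then gives (a).

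For (b), I additionally impose the fixed-point condition $f(D) = D$, equivalent to $f(0) = f(f(0))$. Since $f(D) - D = d\bigl(I_1(A,B,D) - C\cdot I_0(A,B,D)\bigr)$ is linear in $C$, the fixed-point variety is rationally parameterized by $(A,B,D) \mapsto (A, B, I_1/I_0, D)$, and its intersection with the open set defined by the local conditions of step (a) contains infinitely many rational points. Lemma \ref{rigid} applied to $c = C$ then implies that any prime $r \nmid D$ divides at most one of the iterates $f^{\circ n}(C)$; consequently, if each $f^{\circ n}(C)$ has some prime $r_n \nmid D$ with odd $v_{r_n}$, the $r_n$ are automatically pairwise distinct, and the $v_{r_n}$ detect the linear independence of $\{f^{\circ n}(C)\}$ in $\Q^{\times}/\Q^{\times 2}$.

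The main obstacle is producing such primes $r_n$ uniformly in $n$. My plan is a diagonal construction: viewing $f^{\circ n}(C)$ as a rational function of $(A,B,D)$ along the fixed-point variety, for each $n$ this function is nonconstant, so for all but finitely many primes $r$ there exists a congruence class of $(A,B,D)$ modulo $r^2$ (avoiding the vanishing locus of $I_0$) making $v_r(f^{\circ n}(C)) = 1$. Inductively choose $r_n$ distinct from $p$, $q$, $r_1, \ldots, r_{n-1}$, and from the primes dividing $D$, and impose the corresponding congruence modulo $r_n^2$; by the Chinese Remainder Theorem these countably many conditions are jointly satisfiable, yielding infinitely many $(A,B,C,D) \in \Q^4$ that meet all the hypotheses of Corollary \ref{surjective} and thus give surjective arboreal representations.
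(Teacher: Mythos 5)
Your proof shares the overall architecture of the paper's argument: reduce to Corollary \ref{surjective} via Lemma \ref{disc}, handle condition (a) with Proposition \ref{big_local} using local conditions at $p$ (Newton polygon) and $q$ (Eisenstein), and impose the fixed-point equation $f(D)=D$ so that Lemma \ref{rigid} controls which primes can divide two distinct iterates $f^{\circ n}(C)$. Your observation that a prime $r\nmid D$ (at which $f$ and $C$ are integral) divides at most one iterate is correct and is exactly what the paper exploits at the end.

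The gap is in the ``diagonal construction'' for producing the primes $r_n$. You propose, for each $n$, to choose a prime $r_n$ and a congruence class of $(A,B,D)$ modulo $r_n^2$ forcing $v_{r_n}(f^{\circ n}(C))=1$, and then to invoke the Chinese Remainder Theorem to satisfy all of these conditions simultaneously. This does not work: CRT handles only finitely many pairwise-coprime moduli. Imposing a nontrivial congruence modulo $r_n^2$ for infinitely many distinct primes $r_n$ determines, at best, an element of $\prod_n \Z/r_n^2\Z$; an actual integer or rational tuple $(A,B,D)$ can only satisfy finitely many independent such constraints. Conversely, if you try to fix $(A,B,D)$ first and let the primes $r_n$ emerge from the factorization of $f^{\circ n}(C)$, you have lost control over whether such an odd-valuation prime exists at all (i.e.\ whether $f^{\circ n}(C)$ is a non-square) and whether it avoids $D$. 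Either way, the step that is supposed to give you the $r_n$ uniformly in $n$ is not justified.

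The paper resolves this with a genuinely different mechanism that your proposal omits entirely: a single auxiliary prime $\ell$, chosen large with $\ell\equiv 3\pmod 4$, together with $q\equiv 1\pmod\ell$ and congruence conditions forcing $C\equiv D\equiv -N^2\pmod\ell$. Since $f(D)=D$, one gets $f^{\circ n}(C)\equiv D\pmod\ell$ for all $n\geq 2$, so each $f^{\circ n}(C)$ is a non-square mod $\ell$ \emph{for every $n$ at once}, using only finitely many local conditions. The archimedean condition $f(C)<0<f^{\circ n}(C)$ for $n>1$ handles $n=1$ and rules out sign ambiguities. Finally, the local conditions at primes dividing $D$ (and the finite bad set $S$) are arranged so that any odd-multiplicity prime $\gamma$ of $f^{\circ n}(C)$ that is a non-square mod $\ell$ cannot divide $D$: if $\gamma\mid D$ and $\gamma\neq q$, then $v_\gamma(f^{\circ n}(C))=v_\gamma(D)$ is forced to be even, a contradiction. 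Only then does Lemma \ref{rigid} give the pairwise distinctness of the $\gamma$'s. You would need to incorporate an idea of this kind (one global witness of non-squareness that works for all $n$, plus a parity control on $v_\gamma(D)$) to close the gap.
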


\begin{proof}

The polynomial $f$ is given by the formula
\begin{multline}\label{poly}
f(x)=x^{2k+2}-\frac{2k+2}{2k+1}Cx^{2k+1}+\frac{2k+2}{k+u+2}2Ax^{k+u+2}-\frac{2k+2}{k+u+1}2ACx^{k+u+1} \\
+ \frac{2k+2}{2u+2}A^2x^{2u+2}- \frac{2k+2}{2u+1}CA^2x^{2u+1} + \frac{2k+2}{k+2}2Bx^{k+2} - \frac{2k+2}{k+1}2BC x^{k+1} + \frac{2k+2}{u+2}2AB x^{u+2} \tag{$\star$} \\
-\frac{2k+2}{u+1}2ABC x^{u+1} + (k+1)B^2 x^{2}-(2k+2)B^2Cx + D.
\end{multline}
The monomials are not necessarily ordered by decreasing degree, but the four largest degree and four smallest degree monomials are the first four and the last four respectively (this is one place where we use the condition $d-3 \geq p \geq d/2+5$).

We will use $s$-adic properties of the coefficients of $f$ for various primes $s$ to prove surjectivity of the associated arboreal representation. For this we will first choose some appropriate primes, and then use weak approximation to choose $A,B,C,D$, forcing $f$ to have a specific local behavior.

Choose a prime number $\ell > d^5, \ell \equiv 3 \pmod 4$. Choose another prime $q$ such that $q \equiv 1 \pmod \ell$ (in particular $q>\ell>d^5$). 
In order to apply Lemma \ref{rigid} we want $D$ to be a fixed point of $f$. The equation $f(D)=D$ is equivalent to the equation $U(A,B,D)- V(A,B,D)C =0$ where 
\begin{align}\label{U}
U(A,B, D)&=D^{2k+2} + \frac{2k+2}{k+u+2}2AD^{k+u+2} + \frac{2k+2}{2u+2}A^2D^{2u+2} + \frac{2k+2}{k+2}2BD^{k+2} \\
\notag &+\frac{2k+2}{u+2}2ABD^{u+2} + (k+1)B^2D^{2}
\end{align}
\begin{align}\label{V}
V(A,B, D)&=\frac{2k+2}{2k+1}D^{2k+1} + \frac{2k+2}{k+u+1}2AD^{k+u+1} + \frac{2k+2}{2u+1}A^2D^{2u+1} + \frac{2k+2}{k+1}2BD^{k+1} \\
\notag &+\frac{2k+2}{u+1}2ABD^{u+1} + (2k+2)B^2 D \
\end{align}
We choose $A,B,N_1 \in \mathbb{Z}$ satisfying the following local conditions.

\begin{description}
\item[prime $\ell$] Consider the quadratic equation $U(A,B, -p^2)+p^2 V(A,B, -p^2)=0$. The discriminant of this quadric $Q$ in $A,B$ is 

 \[-(2k+2)^2p^{4u+4t+8}\left( \frac{u^2(u+1)(12u^2+37u+29)}{2(2u+1)(2u+2)(u+1)^2(u+2)^2} \right)\]    
 which is a nonzero element of $\Q$. Since $\ell$ is large ($\ell>d^5$),  the discriminant of $Q$ is nonzero modulo $\ell$. Also (since $\ell$ is large) the quadric $V(A, B, -p^2)$ is not proportional to $Q$. Let $(A_\ell,B_\ell)$ be an $\F_\ell$ point on the (affine) conic $Q=0$ that does not lie on $V(A,B, -p^2)=0$.  Choose $N_1 \equiv p \pmod \ell$ and $(A,B) \equiv (A_\ell, B_\ell) \pmod \ell$.
 
 \item[prime $p$] Let $v_p(N_1)=v_p(A)=v_p(B)=1$. 
 
 \item[prime $q$] Let $v_q(A)=v_q(B)=1$ and $v_q(N_1)=0$. 
 
 \item[primes $<d$] For every prime $s$ less than $d$ and not equal to $2$  let $v_s(A)=v_s(B)=v_s(N_1)=v_s(d!)$. For $s=2$ assume that $v_2(A)=v_2(B)=0$. There exists a constant $M$ such that when $v_2(z)>M$ and $v_2(x)=v_2(y)=0$ the valuation of $U(x,y,x)/V(x,y,z)$ is \[v_2\left(\frac{(k+1)y^2z^2}{(2k+2)y^2z}\right)=v_2(z)-1.\] Let $v_2(N_1)=M$.

\end{description}
Let $K$ be the product of primes that are greater than $d$, not equal to $p,q,\ell$, and divide $N_1$. After raising $K$ to a sufficiently large power we can assume that $K$ is $1$ modulo every prime less than $d$ and also modulo $pq\ell$. The triple $(A,B,N_1  K^m)$ also satisfies the local conditions above for every sufficiently large $m$. Let $C(m):=U(A,B, -qN_1^2 K^{2m})/V(A,B, -qN_1^2K^{2m})$. Consider any prime $s|K$.  From the triangle inequality and formulas \eqref{U}, \eqref{V} there exists $M_0$ such that for $m>M_0$ we have $v_s(C(m))=2v_s(N_1K^m)$. Let $D(m)=-qN_1^2K^{2m}$. Consider the polynomial $f_m$ associated to $A,B, C(m), D(m)$ (given by the formula \eqref{poly}). When $m$ tends to infinity we have  $C(m) \sim \frac{2k+1}{2k+2}D,$ $f_m(C(m)) \sim -\frac{1}{2k+1}C(m)^{2k+2},$ $f_m^{\circ 2}(C(m)) \sim \frac{1}{(2k+1)^{2k+2}} C(m)^{(2k+2)^2}.$ Since $f'_m(x)=(x-C(m))(X^k+AX^u+B)^2$ the function $f_m$ is decreasing when $x \leq C(m)$ and increasing when $x \geq C(m).$ There exists $M_1$ such that for $m>M_1$ we have $C(m), f_m(C(m))<0$ and $f_m^{\circ 2}(C(m))>0>C(m)$ which implies that $f_m^{\circ n}(C(m))>0$ for all $n>1$. Finally fix an arbitrary $m>\max(M_0, M_1)$ and set $C=C(m)$, $D=D(m)$, $N=N_1K^m$.

Local conditions on $A,B,N$ imply local properties of $D=-qN^2$. We can also find valuations of $C=U(A,B,D)/V(A,B,D)$ using the non-archimidean triangle inequality. 

To summarize, we have chosen $A,B,N, D \in \mathbb{Z}$ and $C \in \mathbb{Q}$ such that the following conditions hold.

\begin{enumerate}
\item \label{fixed} $f(D)=D$, since $C=U(A,B,D)/V(A,B,D)$;

\item \label{D=-qN^2} $D=-qN^2$;

\item \label{C=D} $C$ is $\ell$-integral and $C \equiv D \pmod \ell$. Since $(A,B,N,q)\equiv (A_l, B_l, p, 1) \pmod \ell$ both $C$ and $D$ are $p^2$ modulo $\ell$;

\item \label{p} Valuations at the prime $p$ satisfy $v_p(D)=2$, $v_p(A)=v_p(B)=1$, $v_p(C)=2$;

\item \label{q} Valuations at the prime $q$ satisfy $v_q(A), v_q(B), v_q(C) \geq 1, v_q(D)=1$;

\item \label{S} There is a finite set of primes $S \subset \mathbb{N} \setminus \{1, ...,d \}$ such that $f$ has coefficients in $\Z[S^{-1}]$, $C$ is in $\Z[S^{-1}]$, and every prime in $S$ divides the denominator of $C$. Indeed our conditions for primes less than $d$ imply that the valuation of $C$ at these primes is nonnegative; 

\item \label{archim} $f(C)$ is negative, and $f^{\circ n}(C)$ is positive for $n>1$;

\item \label{C>D} For every prime $s \neq q$  dividing $D$ we have $v_s(C)\geq v_s(D)/2$. Indeed, primes dividing $D$ are primes dividing $pqK$ and primes less than $d$. For each of them the property holds. 
\end{enumerate}

We now show that conditions \ref{fixed} -- \ref{C>D} imply surjectivity of the arboreal representation.  Condition \ref{q} implies that $f$ is Eisenstein at $q$, therefore by Lemma \ref{Eisenstein} all iterates of $f$ are irreducible. Note that since $D \neq 0$, by condition \ref{fixed} the point $0$ is not periodic under $f$ and the arboreal representation is well-defined. Condition \ref{p} implies that the Newton polygon of $f$ at $p$ is a union two segments: one from $(0,2)$ to $(p,0)$ and one from $(p,0)$ to $(d,0)$. Hence by proposition \ref{big_local} for any $n$ and $\alpha \in f^{-n}(0)$ the Galois group of the splitting field of $f(x) - \alpha$ over $K(\alpha)$ is $A_d$ or $S_d$. Now consider the discriminants $\disc(f^{\circ n}(x))$ as elements of $\Q/\Q^{\times 2}$. If they are linearly independent, Corollary \ref{surjective} would imply that the arboreal representation is surjective. By Lemma \ref{disc} we have the equality $\disc(f^{\circ n}(x)) \equiv  f^{\circ n}(C) \pmod {\Q^{\times 2}}$. We claim that the numbers $f^{\circ n}(C)$ are independent modulo squares. Indeed $f(C)$ is negative by condition \ref{archim}, and therefore is not a square. Consider the number $f^{\circ n}(C)$ for some $n>1$. By condition \ref{C=D} we have the formula $f^{\circ n}(C) \equiv f^{\circ n}(D) \equiv D \equiv -N^2 \pmod \ell$. Therefore $f^{\circ n}(C)$ is not a square modulo $\ell$ since $\ell \equiv 3 \pmod 4$. By condition \ref{S} the number $f^{\circ n}(C)$ is $S$-integral. For any prime $s \in S$ when \eqref{poly} is evaluated at $x=C$ the valuation of the sum of the first two terms is smaller than the valuation of the rest of the terms. The non-Archimedean triangle inequality implies that $v_s(f(C))$ is equal to the $s$-adic valuation of $(x^{2k+2}-\frac{2k+2}{2k+1}Cx^{2k+1})|_{x=C}=-\frac{1}{2k+1}C^{2k+2}$ which is $(2k+2)v_s(C) < v_s(C)$. If $v_s(x)<v_s(C)$, then $v_s(f(x))=(2k+2)v_s(C)$. Therefore the $s$-adic valuation of $f^{\circ n}(C)$ is even and negative. Since $f^{\circ n}(C)$ is positive, has square denominator, and is not a square modulo  $\ell$, the numerator of $f^{\circ n}(C)$ has an odd multiplicity prime divisor $\gamma \not\in S$ that is not a square modulo $\ell$. If $v_\gamma(f^{\circ m}(C))$ is even for all $m<n$, then $f^{\circ n}(C)$ is linearly independent from $f^{\circ m}(C)$ when $ m<n$. Assume that $v_\gamma (f^{\circ n}(C))$ is odd. Then by Lemma \ref{rigid}, $\gamma$ divides $D$.  The prime $\gamma$ does not equal $q$, since $q \equiv 1 \pmod \ell$. Hence condition \ref{D=-qN^2} implies $\gamma$ divides $N$ and condition \ref{C>D} implies $v_\gamma(C) >0$. By the formula \eqref{poly} and the triangle inequality if $v_\gamma(x) \geq v_\gamma(D)/2$, then $v_\gamma(f(x))  \geq v_\gamma(D)$, and if $v_\gamma(x) \geq v_\gamma(D)$, then $v_\gamma(f(x))=v_\gamma(D)$. Therefore by condition \ref{C>D} the valuation $v_{\gamma}(f^{\circ n}(C))$ is equal to $v_{\gamma}(D)$ which is even, contradiction. Thus the numbers $f^{\circ k}(C)$ are independent modulo squares.

\end{proof}
\section*{Acknowledgements} 

I would like to thank my advisor Bjorn Poonen for careful reading of the paper and many helpful discussions. I also thank Dmitri Kubrak and Svetlana Makarova for comments on an earlier draft of the paper. I am grateful to an anonymous referee for suggesting to add an ``only if'' statement to Theorem \ref{monodromy}.  
\begin{bibdiv}
\begin{biblist}

\bibselect{big}

\end{biblist}
\end{bibdiv}

\end{document}